\documentclass[12pt]{amsart}

\usepackage{amsmath, amssymb, amscd, newlfont}
\usepackage{comment}


%
\setlength{\oddsidemargin}{0in}
\setlength{\evensidemargin}{0in}
\setlength{\textwidth}{6.5in}
\setlength{\textheight}{8.5in}



\newcommand{\cInd}{{\text {\rm{c-Ind}}}}

\newcommand{\bbA}{{\mathbb{A}}}

\newcommand{\bbQ}{{\mathbb{Q}}}

\newcommand{\bbR}{{\mathbb{R}}}
\newcommand{\bbZ}{{\mathbb{Z}}}

\newcommand{\Hom}{{\mathrm{Hom}}}

\newcommand{\supp}{{\mathrm{supp}}}

\newcommand{\Ind}{{\mathrm{Ind}}}

\newcommand{\wit}{\widetilde}

\newcommand{\calO}{{\mathcal{O}}}

\usepackage[mathscr]{eucal}
\def \ScptA{\mathcal A}
\def \ScptB{\mathcal B}

\font\sans=cmss10

\numberwithin{equation}{section}

\newtheorem{Prop}[equation]{Proposition}
\newtheorem{Lem}[equation]{Lemma}
\newtheorem{Def}[equation]{Definition}
\newtheorem{Thm}[equation] {Theorem}
\newtheorem{Cor}[equation]{Corollary}

\newtheorem{Assumptions}[equation]{Assumptions}

\title
[
On Existence of Generic Cusp Forms
]
{On Existence of Generic Cusp Forms on Semisimple Algebraic Groups}

\author{Allen Moy and Goran Mui\'c}

\address{ Department of Mathematics,
The Hong--Kong University of Science and Technology,
Clear Water Bay, Hong Kong}
\email{amoy@ust.hk}

\address{ Department of Mathematics,
University of Zagreb,
Bijeni\v cka 30, 10000 Zagreb,
Croatia}
\email{gmuic@math.hr}
\subjclass{11E70, 22E50}
\keywords{Cuspidal Automorphic Forms, Poincar\' e Series, Fourier coefficients}

\thanks{The 1st author acknowledges Hong Kong Research Grants Council
  grant CERG {\#}603813, and the 2nd  author acknowledges Croatian Science Foundation grant no. 9364.}

\begin{document}

\begin{abstract}
  In this paper we discuss the existence of certain classes of cuspidal automorphic representations having
  non--zero Fourier coefficients for general semisimple algebraic group $G$ defined over a number field $k$
  such that its Archimedean group $G_\infty$ is not compact. When $G$ is quasi--split over $k$, we obtain a result 
  on existence of generic cuspidal automorphic representations which generalize a result of Vign\' eras, Henniart, and Shahidi. We also discuss
  the existence of  cuspidal automorphic forms with non--zero Fourier coefficients for congruence of subgroups of $G_\infty$. 
  \end{abstract}
\maketitle

\section{Introduction}

Possibly degenerate Fourier coefficients of automorphic cuspidal forms are important for the theory of automorphic
$L$--functions (\cite{Sh}, \cite{grs}, \cite{soudry},\cite{Li}).
Recent classification of discrete global spectrum for classical groups due to 
Arthur \cite{arthur} can not be used directly to study Fourier coefficients of cuspidal automorphic forms. 
The goal of the present paper is to adjust  methods of compactly
supported Poincar\' e series as developed in \cite{Muic1} in order to show existence of various types of cuspidal automorphic
forms with non--zero Fourier coefficients for a  general semisimple algebraic group $G$ over a number field $k$. We warn the
reader that compactly supported Poincar\' e series are of a quite different nature than more classical Poincar\' e series
considered in \cite{Borel1966}, \cite{Borel-SL(2)-book}, \cite{bb} where the Archimedean
group $G_\infty$ must poses representations in discrete series (see the recent works that treat that kinds of series
\cite{kls}, \cite{MuicMathAnn}, \cite{MuicJNT}, \cite{MuicIJNT}, \cite{MuicFCIN}).

Now, we explain the results of the present paper. We let $G$ be a semisimple algebraic group defined
over a number field $k$.  We write $V_f$ (resp.,
$V_\infty$) for the set of
finite  (resp., Archimedean) places.  For $v\in V:=V_\infty\cup V_f$, we write
$k_v$ for the completion of $k$ at $v$. If $v\in V_f$, we let
$\cal O_v$ denote the ring of integers of $k_v$. Let $\bbA$ be the
ring of adeles of $k$. For almost all places of $k$, $G$
is a group scheme over $\mathcal O_v$, and $ G(\cal O_v)$  is a hyperspecial maximal compact
subgroup of $G(k_v)$ (\cite{Tits}, 3.9.1); we say $G$ is unramified over $k_v$.  The group of adelic
points $G(\bbA)=\prod_{v}'
G(k_v) $ is a restricted product over all places of $k$ of the
groups $G(k_v)$. The group $G(\bbA)$ is a locally compact group and
$G(k)$ is embedded diagonally as a discrete subgroup. The group
$G_\infty=\prod_{v\in V_\infty} G(k_v)$ is a semisimple Lie group with finite center but possibly disconnected. We assume that
$G_\infty$ is not compact. We denote by $L^2_{cusp}(G(k)  \setminus G(\bbA))$ a unitary representation of $G(\mathbb A)$ on the
space of all cuspidal $L^2$--functions on $G(k)  \setminus G(\bbA)$ (see Section \ref{prelim} for details). It decomposes
into a direct sum of irreducible unitary representations of $G(\mathbb A)$ called cuspidal automorphic representations.
As opposed to \cite{MuicFCIN} where we deal with underlying Fr\' echet spaces, in this paper we mostly deal with $L^2$ spaces. 

Let $U$ be a unipotent $k$-subgroup of $G$. Let
$\psi: U(k)\setminus U(\mathbb A)\longrightarrow \mathbb C^\times$
be a (unitary) character. We warn the reader that $\psi$ might be trivial. In Section \ref{fc} we define
a $(\psi, U)$--Fourier coefficient of $\varphi\in  L^2(G(k)\setminus G(\bbA))$  by the integral 
\begin{equation}\label{fc-0-int}
{\mathcal F}_{(\psi, U)}(\varphi)(g)=\int_{U(k)\setminus U(\mathbb A)}\varphi(ug)\overline{\psi(u)} du
\end{equation}
which converges almost everywhere  for  $g\in G(\mathbb A)$. We say that $\varphi$ is
$(\psi, U)$--generic if ${\mathcal F}_{(\psi, U)}(\varphi)\neq 0$ (a.e.) for $g\in G$. According to \cite{sh0}, if $G$ is quasi--split over $k$, 
$U$ is the unipotent radical of a Borel
subgroup of $G$ defined over $k$,and $\psi$ is non--degenerate in appropriate sense, then we use the term  $\psi$--generic
instead of $(\psi, U)$--generic. We refer to this settings  as ordinary generic case.

In Section \ref{fc} we adjust the arguments of (\cite{Muic1}, Section 4, Theorem 4.2)
to construct  compactly supported Poincar\' e series with non--zero $(\psi, U)$--Fourier coefficients. We give some details. As an input we have
a finite set of places $S$, containing $V_\infty$, large enough such that $G$, $U$,
and $\psi$ are  unramified for $v\not\in S$, and 
for each $v\in V_f$ we have  $f_v \in C_c^\infty(G(k_v))$ and an open compact subgroup $L_v\subset G(k_v)$ satisfying the following conditions:
\begin{itemize}
\item[(I-a)]  $f_v(1)\neq 0$, for all  $v\in V_f$,
\item[(I-b)] $f_v=1_{G(\calO_v)}$ and  $L_v=G(\calO_v)$ for all $v\not\in S$,
\item[(I-c)] for $v\in  S-V_\infty$, we have
$\int_{U(k_v)} f_v(u_v)\overline{\psi_v(u_v)} du_v\neq 0$,
\item[(I-d)] and, for each $v\in S-V_\infty$, we require that $f_v$ is right--invariant under $L_v$.
\end{itemize}
Then, as an output, we find  $f_\infty \in
C_c^\infty(G_\infty)$ such that  if we let  $f=f_\infty\otimes_{v\in V_f} f_v\in C_c^\infty(G(\mathbb A))$, then the compactly supported
Poincar\' e series
\begin{equation}\label{FC-i}
P(f)(g)=\sum_{\gamma\in G(k)} f(\gamma g), \ \ g\in G(\mathbb A),
\end{equation}
satisfies 
\begin{itemize}
\item[(I-i)]  ${\mathcal F}_{(\psi, U)}(P(f))(1)\neq 0$. In particular, $P(f)$ is a non--zero element of
$L^2(G(k)\setminus G(\mathbb A))^L$, where the open compact subgroup $L$ is defined by $L=\prod_{v\in V_f}L_v$, and $P(f)$ is   $(\psi, U)$--generic.
\item[(I-ii)] $P(f)|_{G_\infty}\neq 0$ and is an element of $L^2(\Gamma_L\setminus G_\infty)$ where $\Gamma_L$
is a congruence subgroup
which corresponds to $L$ from (i) (see (\ref{discreteS-01})).
\item[(I-iii)] $\int_{\Gamma_L\cap U_\infty\setminus U_\infty}P(f)(u_\infty)\overline{\psi_\infty(u_\infty)}du_\infty\neq 0$, where $U_\infty=\prod_{v\in V_\infty} U(k_v)$.
\end{itemize}
The reader may observe that among conditions (I-a)--(I-d), only the conditions (I-c) and (I-d) are delicate.
First, we explain how to assure (I-c) and what are the consequences of (I-i). Later we explain how to deal with
(I-d) and  what are the consequences of (I-ii) and (I-iii).

In Section \ref{lnd}, we fix $v\in V_f$ and consider local $(\psi_v, U(k_v))$--generic representations. Using Bernstein
theory \cite{Be1}, we show how to construct functions $f_v$ satisfying the conditions (I-c) while at the same time we control
the smooth module generated by $f_v$ under right translations. Lemma \ref{lnd-1} contains the result regarding the relation
between non--vanishing of Fourier coefficients and theory of Bernstein classes (it generalizes (\cite{Muic1}, 
Lemma 5.2)). We end Section \ref{lnd} with a result (see Theorem \ref{lnd-4})
regarding the decomposition of algebraic compactly induced representation $c-\Ind_{U(k_v)}^{G(k_v)}(\psi_v)$ (its smooth contragredient is
$\Ind_{U(k_v)}^{G(k_v)}(\overline{\psi}_v)$)  according to Bernstein classes in ordinary generic case (see above).  It  uses global methods of Section \ref{gl}
which in turn rely on above construction of Poncar\' e series in a special case.

In Section \ref{gl}, we prove the main global results. Before describe them we introduce some notation.   In Section \ref{gl}, we define
notion of a $(\psi, U)$--generic $G(\mathbb A)$--irreducible closed subspace of $L^2_{cusp}(G(k)  \setminus G(\bbA))$ as follows. 
First, we define a  closed subrepresentation 
$$
L^2_{cusp,  \text{$(\psi, U)$--degenerate}}(G(k)\setminus G(\bbA))=\left\{\varphi \in
L^2_{cusp}(G(k)  \setminus G(\bbA)); \ \ \text{$\varphi$ is not $(\psi, U)$--generic} \right\}.
$$
Then,  an irreducible closed subrepresentation $\mathfrak U$ of $L^2_{cusp}(G(k)  \setminus G(\bbA))$
is $(\psi, U)$--generic if
$$
\mathfrak U\not\subset L^2_{cusp, \ \text{$(\psi, U)$--degenerate}}(G(k)\setminus G(\bbA)).
$$
The reader might be surprised with this definition but passing to $K$--finite vectors $\mathfrak U_K$
($K$ is a maximal compact subgroup of $G(\mathbb A)$) we obtain  usual definition \cite{sh0}. In particular, if we decompose  $\mathfrak U_K$ 
 into restricted tensor product of local representations $\mathfrak U_K=\pi_\infty \otimes_{v\in V_f} \pi_v$, then all local
representations $\pi_v$ ($v\in V_f$) are $(\psi_v, U(k_v))$--generic in usual sense (see Lemma \ref{gl-0}).  Introducing
the notion of $(\psi, U)$--generic representation in this way, makes  possible  to detect the existence of   $(\psi, U)$--generic representations
contributing to the spectral decomposition of  Poincar\' e series $P(f)$ (defined by (\ref{FC-i})).

 We remark here, and this crucial for considerations of Section \ref{gl},  that
by combining local results of Section \ref{lnd} with (\cite{Muic1}, Proposition 5.3) we may control 
local components in (I-c) not only
to assure that the Poincar\' e series $P(f)$ has a non--zero Fourier coefficient (see (I-i) above)
but also that $P(f)\in L^2_{cusp}(G(k)  \setminus G(\bbA))$. 
Finally, after all of these preparations,  the  main result of the present paper is  the following theorem 
(see Theorem \ref{gl-6}):

\begin{Thm}\label{gl-6-int}
Assume that $G$ is a semisimple  algebraic group defined over a number field $k$. Let $U$ be a unipotent $k$-subgroup.
Let $\psi: U(k)\setminus U(\mathbb A)\longrightarrow \mathbb C^\times$
be a (unitary) character.  Let $S$ be a finite set of places, containing $V_\infty$, large enough such that $G$
 and $\psi$ are  unramified for $v\not\in S$ (in particular, $\psi_v$ is trivial on $U(\mathcal O_v)$).
For each finite place $v\in S$,  let $\mathfrak M_v$ be  a   
$(\psi_v, U(k_v))$--generic Bernstein's class (i.e., there is a $(\psi_v, U(k_v))$--generic irreducible representation 
which belongs to that class; see Definition \ref{lnd-0}) such that the following holds:
 if $P$ is a $k$--parabolic subgroup of $G$ such that
a Levi subgroup of $P(k_v)$ contains a conjugate of a Levi subgroup defining  $\mathfrak M_v$ for all finite $v$ in
$S$,  then $P=G$. Then, there exists an irreducible subspace in
$L^2_{cusp}(G(k)\setminus G(\bbA))$ which is $(\psi, U)$--generic such that its
$K$--finite vectors  $\pi_\infty\otimes_{v\in V_f} \pi_v$ satisfy the following:
\begin{itemize}
\item[(i)] $\pi_v$ is unramified for $v\not\in S$.
\item[(ii)] $\pi_v$ belongs to the class $\mathfrak M_v$ for all finite $v\in S$.
\item[(iii)] $\pi_v$ is $(\psi_v, U(k_v))$--generic for all finite $v$.
\end{itemize}
\end{Thm}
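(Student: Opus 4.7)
The plan is to build an explicit compactly supported Poincar\'e series $P(f)$ whose spectral decomposition in $L^2_{cusp}$ is forced to contain an irreducible subrepresentation with the prescribed local shape. I first assemble local data at each finite place. For $v \notin S$, I set $f_v = 1_{G(\calO_v)}$ and $L_v = G(\calO_v)$; conditions (I-a), (I-b) are automatic, and the unramifiedness of $\psi_v$ on $U(\calO_v)$ guarantees (I-c) locally. For each $v \in S - V_\infty$, I pick a $(\psi_v, U(k_v))$--generic irreducible representation in the class $\mathfrak{M}_v$ and apply the local construction of Section \ref{lnd} (centered on Lemma \ref{lnd-1}) to produce $f_v \in C_c^\infty(G(k_v))$ and an open compact $L_v$ such that $f_v(1) \neq 0$, $f_v$ is right $L_v$--invariant, the local integral $\int_{U(k_v)} f_v(u)\overline{\psi_v(u)}\,du$ does not vanish, and, crucially, every irreducible smooth subquotient of the right $G(k_v)$--module generated by $f_v$ lies in $\mathfrak{M}_v$.

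With all $f_v$ in hand, I invoke the global Poincar\'e series construction recalled in Section \ref{fc} to obtain $f_\infty \in C_c^\infty(G_\infty)$ such that $f = f_\infty \otimes \bigotimes_{v \in V_f} f_v$ gives a non--zero $P(f) \in L^2(G(k)\setminus G(\bbA))^L$ with $\mathcal F_{(\psi,U)}(P(f))(1) \neq 0$. To land inside the cuspidal spectrum, I combine this with Proposition 5.3 of \cite{Muic1}: that proposition writes the cuspidal projection of $P(f)$ as a sum indexed by $k$--parabolics, whose non--diagonal contributions are controlled by the parabolic support of the local modules generated by the $f_v$. The hypothesis that no proper $k$--parabolic $P \subsetneq G$ has a Levi subgroup which at every finite $v \in S$ contains a conjugate of the Levi defining $\mathfrak{M}_v$ is exactly what kills all non--trivial parabolic contributions, so (after shrinking $f_\infty$ if necessary while preserving the non--vanishing of the Fourier coefficient) we obtain $P(f) \in L^2_{cusp}(G(k)\setminus G(\bbA))$ with $\mathcal F_{(\psi,U)}(P(f)) \neq 0$.

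Now decompose $P(f)$ spectrally in $L^2_{cusp}$. Because $\mathcal F_{(\psi,U)}(P(f))$ is non--zero, the orthogonal projection of $P(f)$ to $L^2_{cusp, (\psi, U)\text{--degenerate}}$ is not all of $P(f)$, so there exists an irreducible closed $G(\bbA)$--subspace $\mathfrak U \subset L^2_{cusp}$ onto which $P(f)$ projects non--trivially and which is $(\psi, U)$--generic. Pass to $K$--finite vectors and write $\mathfrak U_K = \pi_\infty \otimes_{v \in V_f} \pi_v$. Equivariance of the projection applied to the factored function $f$ shows that each $\pi_v$ admits a non--zero vector in the image of $f_v$, so at $v \notin S$ this vector is $G(\calO_v)$--fixed (giving (i)), and at $v \in S - V_\infty$ the local control ensures $\pi_v \in \mathfrak{M}_v$ (giving (ii)). Finally, Lemma \ref{gl-0} translates the global genericity of $\mathfrak U$ into $(\psi_v, U(k_v))$--genericity of every $\pi_v$, which is (iii).

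The delicate step is the second paragraph: guaranteeing \emph{simultaneously} that $P(f)$ is cuspidal and retains its non--vanishing $(\psi, U)$--Fourier coefficient. This is where the parabolic hypothesis on the family $\{\mathfrak{M}_v\}_{v \in S - V_\infty}$ is consumed, via Proposition 5.3 of \cite{Muic1} together with the Bernstein--theoretic control of the module generated by $f_v$ furnished in Section \ref{lnd}. Everything else---the existence of $f_\infty$, the spectral decomposition, and the descent from global to local genericity---is either a direct invocation of the Section \ref{fc} construction or a standard unfolding argument.
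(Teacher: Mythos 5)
Your outline follows the general Poincar\'e series strategy of the paper, but it elides a genuine subtlety at the very first step, and the paper's proof of Theorem~\ref{gl-6} is designed precisely to navigate it. You write that because $\mathfrak M_v$ is a $(\psi_v, U(k_v))$--generic Bernstein class, you may ``apply the local construction of Section~\ref{lnd} (centered on Lemma~\ref{lnd-1})'' to produce the test functions $f_v$. But Lemma~\ref{lnd-1} takes as its hypothesis that $\cInd_{U(k_v)}^{G(k_v)}(\psi_v)(\mathfrak M_v)\neq 0$, and this is \emph{not} what genericity of the class gives you. Genericity of $\mathfrak M_v$ supplies an irreducible $\pi$ with an embedding $\pi\hookrightarrow \Ind_{U(k_v)}^{G(k_v)}(\psi_v)$, hence $\Ind_{U(k_v)}^{G(k_v)}(\psi_v)(\mathfrak M_v)\neq 0$; but nothing forces that embedded copy of $\pi$ to meet the subrepresentation $\cInd_{U(k_v)}^{G(k_v)}(\psi_v)$. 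Lemma~\ref{lnd-0a}(i) records only the implication from $\cInd\neq 0$ to $\Ind\neq 0$, not its converse, and parts (iv)--(v) give the two-sided statement only for supercuspidal classes, where projectivity is available. What Lemma~\ref{lnd-0a}(iii) actually says is that genericity of $\mathfrak M_v$ is equivalent to $\cInd_{U(k_v)}^{G(k_v)}(\overline{\psi}_v)(\wit{\mathfrak M}_v)$ having an irreducible quotient --- the conjugate character and the contragredient class appear, not the original ones.

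This is exactly why the paper's proof does not apply Lemma~\ref{gl-1} directly to $(\psi, \mathfrak M_v)$. Instead it applies Lemma~\ref{gl-1} to the data $(\overline{\psi}, \wit{\mathfrak M}_v)$, whose $\cInd$--nonvanishing \emph{is} guaranteed by Lemma~\ref{lnd-0a}(iii), to produce an irreducible $(\overline{\psi}, U)$--generic cuspidal subspace with local components in $\wit{\mathfrak M}_v$; it then realizes the contragredient of that subspace on complex-conjugate functions, conjugates the Fourier coefficient, and reads off that this contragredient is $(\psi, U)$--generic with local components in $\mathfrak M_v$. (Note that the parabolic hypothesis is unaffected by passing to $\wit{\mathfrak M}_v$, since the defining Levi subgroup is unchanged.) To repair your argument you should either insert this contragredient--conjugate detour explicitly, or else prove the nontrivial implication ``$\mathfrak M_v$ $(\psi_v, U(k_v))$--generic $\Rightarrow$ $\cInd_{U(k_v)}^{G(k_v)}(\psi_v)(\mathfrak M_v)\neq 0$''; the latter is not established in the paper for non-supercuspidal classes and should not be taken for granted. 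The remainder of your sketch (cuspidality via \cite{Muic1}, Proposition~5.3, the spectral decomposition, pinning down the local components, and invoking Lemma~\ref{gl-0}) matches the mechanism inside Lemma~\ref{gl-1} and is fine once the input is corrected.
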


\vskip .2in 
In ordinary generic case, the local results of  Rodier (\cite{ro}, \cite{ro1}) are used to reformulate the requirement  
that the   classes $\mathfrak M_v$  are   $(\psi_v, U(k_v))$--generic in its standard  form (see Lemma \ref{lnd-3}).
In this particular case, the theorem is a vast generalization of similar
results of Henniart, Shahidi, and Vign\' eras (\cite{Hen}, \cite{vig}, \cite{Sh}, Proposition 5.1)  about existence of cuspidal automorphic 
 representations with supercuspidal local components. (See 
 Corollary \ref{gl-7} for details.)  This is because our assumption

\vskip .2in 

\noindent {\it   If $P$ is a $k$--parabolic subgroup of $G$ such that
a Levi subgroup of $P(k_v)$ contains a conjugate of a Levi subgroup defining  $\mathfrak M_v$ for all finite $v$ in
$S$,  then $P=G$. }
\vskip .2in
\noindent{} is satisfied if one of the classes is supercuspidal. In general, none of the classes needs to 
be supercuspidal (see \cite{Muic1} for examples). 

Final remark regarding the theorem is about the case in which $U$ is the unipotent radical of a proper
$k$--parabolic subgroup of $G$, and  $\psi$  is trivial. In this case, the assumptions of the theorem taken together 
do not hold (see the text after Lemma \ref{gl-1} for explanation). 
Therefore, the theorem can not be applied to this case. Of course, 
this is expected since constant terms along proper $k$--parabolic subgroups of  cuspidal automorphic forms vanish 
(they are Fourier coefficients in this particular case).    

In Section \ref{sec-6} we deal with (I-d). For $v\in S-V_\infty$, we construct very specific matrix coefficients $f_v$ of
generic local supercuspidal representations of $G(k_v)$ and open compact subgroups $L_v\subset G(k_v)$  such that (I-c) and (I-d)  hold
(see Proposition \ref{allen-1}). We use the results of (\cite{MoyPr1}, \cite{MoyPr2}).  In Theorem \ref{intr-thm} of Section \ref{newforms} 
we use these results along with the methods of \cite{MoyMuic} to prove the existence of certain
$(\psi_\infty, U_\infty)$--generic cuspidal automorphic representations on $L^2_{cusp}(\Gamma_L\setminus G_\infty)$. We use (I-ii) and (I-iii).

The second named author would like to thank the Hong Kong University of Science and Technology for their hospitality during his 
visit in May of 2014 when the first draft of the paper was written. The second named author would also like to thank the University of Utah 
for their hospitality during his visit in May of 2015 when the final version of the  paper was written.

\section{Preliminaries}\label{prelim}

We let $G$ be a semisimple algebraic group defined
over a number field $k$.  We write $V_f$ (resp.,
$V_\infty$) for the set of
finite  (resp., Archimedean) places.  For $v\in V:=V_\infty\cup V_f$, we write
$k_v$ for the completion of $k$ at $v$. If $v\in V_f$, we let
$\cal O_v$ denote the ring of integers of $k_v$. Let $\bbA$ be the
ring of adeles of $k$. For almost all places of $k$, $G$
is a group scheme over $\mathcal O_v$, and $ G(\cal O_v)$  is a hyperspecial maximal compact
subgroup of $G(k_v)$ (\cite{Tits}, 3.9.1); we say $G$ is unramified over $k_v$.  The group of adelic
points $G(\bbA)=\prod_{v}'
G(k_v) $ is a restricted product over all places of $k$ of the
groups $G(k_v)$: $g=(g_v)_{v\in V} \in G(\bbA)$ if and only if $g_v\in G(\cal O_v)$ for
almost all $v$.  $G(\bbA)$ is a locally compact group and
$G(k)$ is embedded diagonally as a discrete subgroup of $G(\bbA)$.

For a finite  subset  $S\subset V$, we let
$$
G_S=\prod_{v\in S} G(k_v).
$$
In addition, if $S$  contains all Archimedean
places $V_\infty$, we let $G^S=\prod_{v\notin S}' G(k_v)$.
Then
\begin{equation}\label{deeecompSSS}
G(\bbA)=G_S\times G^S.
\end{equation}
We let
$G_\infty=G_{V_\infty}$ and
$G(\bbA_f)=G^{V_\infty}$.

Let $S\subset V$ be a finite set of places containing $V_\infty$ such that $G$ is unramified over
$k_v$. For each $v\in V_f$ we select an open--compact subgroup $L_v$ such that
$L_v=G(\mathcal O_v)$ for all $v\not\in S$. We define an open compact subgroup $L\subset G(\mathbb A_f)$ as follows:
$$
L=\prod_{v\in V_f} L_v.
$$
We consider $G(k)$   embedded diagonally in $G^S$ and define
$$
\Gamma_S=\left(\prod_{v\not\in S}G(\calO_v)\right)\cap G(k).
$$
This can be considered as a  subgroup of $G_S$ using the diagonal embedding of $G(k)$ into the product (\ref{deeecompSSS}) and then the projection to
the first component. Since $G(k)$ is a discrete subgroup of $G(\bbA)$, it follows  that $\Gamma_S$ is a
discrete subgroup of $G_S$. In particular for $S=V_\infty$,  considering $G(k)$   embedded diagonally in $G(\mathbb A_f)$, we define
\begin{equation}\label{discreteS-01}
\Gamma_L=L \cap G(k),
\end{equation}
where $L$ is any open--compact subgroup of $G(\mathbb A_f)$. We obtain a discrete subgroup of $G_\infty$ called a congruence subgroup.

The topological space $G(k)\setminus G(\bbA)$ has a finite volume
$G(\bbA)$--invariant measure:

\begin{equation}\label{invmeas-adel}
 \int_{G(k)\setminus  G(\bbA)} P(f)(g)dg \overset{def}{=}\int_{G(\bbA)}f(g)dg, \ \ f \in C^\infty_c(G(\bbA)),
\end{equation}
where the adelic compactly supported Poincar\' e series  $ P(f)$ is defined as follows:
\begin{equation}\label{adel-CCC-PPP}
P(f)(g)=
\sum_{\gamma\in G(k)}  f(\gamma\cdot g) \in
C_c^\infty(G(k)\setminus G(\bbA)).
\end{equation}
We remark that the space $ C_c^\infty(G(k)\setminus G(\bbA))$ is a subspace of 
$C^\infty(G(\bbA))$ consisting of all functions which are $G(k)$--invariant on the left and
which are compactly supported modulo $G(k)$.

The measure introduced in (\ref{invmeas-adel}) enables us to introduce the Hilbert
space  $ L^2(G(k)\setminus G(\bbA))$, where the inner product is the usual Petersson inner product
$$
\langle \varphi, \ \psi\rangle= \int_{G(k)\setminus G(\mathbb A)}\varphi(g)\overline{\psi(g)}dg.
$$
It is a  unitary representation  of
$G(\bbA)$ under right translations. Next, we define a closed subrepresentation $L^2_{cusp}(G(k)  \setminus G(\bbA))$ consisting of
all cuspidal functions. We recall the definition of  $L^2_{cusp}(G(k)  \setminus G(\bbA))$ and its basic properties.

Since $G(k)\setminus G(\bbA)$ has a finite volume, H\" older inequality implies that 
$ L^2(G(k)\setminus G(\bbA))$ is a subset of $ L^1(G(k)\setminus G(\bbA))$. Every function $\varphi\in  L^1(G(k)\setminus G(\bbA))$ is
 locally integrable on  $G(\bbA)$. This means that for every compact set $C\subset G(\mathbb A)$ we have 
$\int_C |\varphi(g)|dg<\infty$. Next, if $U$ is a $k$--unipotent subgroup of $G$, then $U(k)\setminus U(\mathbb A)$ is compact. Thus, there 
exists a compact neighborhood $D$ of identity of $U(\mathbb A)$ such that $U(\mathbb A)=U(k) D$. Then, for every compact set $C\subset G(\mathbb A)$  we have 

\begin{align*}
\int_C |\varphi(g)| dg&= \int_{U(\mathbb A)\setminus U(\mathbb A)C}\left(\int_{U(k)\setminus U(\mathbb A)} |\varphi(ug)|\sum_{\gamma\in U(k)} 1_C(\gamma ug)du\right)dg\\
&\ge
\int_{U(\mathbb A)\setminus U(\mathbb A)C}\left(\int_{U(k)\setminus U(\mathbb A)} |\varphi(ug)| du\right)dg.
\end{align*}
Letting $C$ vary, this implies 
$$
\int_{U(k)\setminus U(\mathbb A)} \left|\varphi(ug)\right|du <\infty, \ \ \text{(a.e.) for $g\in G(\mathbb A)$.}
$$
If $P$ is a  $k$--parabolic  subgroups  of $G$,
then we denote by $U_P$ the unipotent radical of $P$. For  $\varphi\in  L^1(G(k)\setminus G(\bbA))$, the constant term is a function 
$$
\varphi_P(g)=\int_{U_P(k)\setminus U_P(\mathbb A)} \varphi(ug) du
$$
defined almost everywhere on $G(\mathbb A)$. We say that $\varphi$  is a cuspidal function if  $\varphi_P=0$ almost everywhere on 
$G(\mathbb A)$ for all proper $k$--parabolic subgroups of $G$. Later in the paper we need   compactly supported Poincar\' e series
which are cuspidal functions. Their construction is a rather delicate.
 Using theory of Bernstein classes \cite{Be1} and smooth representation theory of $p$--adic groups 
 we describe fairly general construction of such functions in (\cite{Muic1}, Proposition 5.3).  We use this construction later in the proofs of our
 main results. A different construction of such functions which are spherical has been done by Lindenstrauss and Venkatesh \cite{lindenven}. 
They rely on Satake isomorphism.

We continue with the description of $L^2_{cusp}(G(k)  \setminus G(\bbA))$. 
The space  $L^2_{cusp}(G(k)  \setminus G(\bbA))$ consists of  all cuspidal functions in  $L^2(G(k)  \setminus G(\bbA))$. Obviously, it is 
$G(\mathbb A)$--invariant. It is closed since it is exactly the  subspace of $L^2(G(k)  \setminus G(\bbA))$ orthogonal to all pseudo--Eisenstein 
series
$$
E(\eta, P)(g)=\sum_{U_P(k)\setminus G(k)} \eta(\gamma g), \ \ g\in G(\mathbb A),
$$
where $P$ ranges over all proper $k$--parabolic subgroups of $G$, and 
$\eta\in C_c(U_P(\mathbb A)\setminus G(\mathbb A))$. This follows immediately from the following integration formula:
$$
\langle \varphi, \ E(\eta, P) \rangle= \int_{U_P(\mathbb A)\setminus G(\mathbb A)} \varphi_P(g)\overline{\eta(g)}dg.
$$
We remark that since $U_P(k)\setminus U_P(\mathbb A)$ is compact, we have that $\eta$
is compactly supported modulo $U(k)$. Consequently, we have 
 $E(\eta, P)\in L^2(G(k)  \setminus G(\bbA))$.

We have the following result from the representation theory:

\begin{Thm}\label{thm-cusp-adel} The space $L^2_{cusp}(G(k)
  \setminus G(\bbA))$  can be decomposed into a direct sum of
  irreducible unitary representations of  $G(\bbA)$ each
occurring with a finite multiplicity.
\end{Thm}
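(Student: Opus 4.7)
The plan is to realize $L^2_{cusp}(G(k)\setminus G(\bbA))$ as a unitary representation of $G(\bbA)$ on which every convolution operator $R(f)$, with $f\in C_c^\infty(G(\bbA))$, acts as a compact (in fact Hilbert--Schmidt) operator; an abstract spectral lemma then forces the representation to decompose as a Hilbert direct sum of irreducible subrepresentations, each occurring with finite multiplicity. Concretely, for $f\in C_c^\infty(G(\bbA))$ and $\varphi\in L^2(G(k)\setminus G(\bbA))$, one writes
\[
(R(f)\varphi)(x)=\int_{G(k)\setminus G(\bbA)} K_f(x,y)\,\varphi(y)\,dy,\qquad K_f(x,y)=\sum_{\gamma\in G(k)}f(x^{-1}\gamma y),
\]
the $\gamma$--sum being locally finite because $f$ has compact support and $G(k)$ is discrete in $G(\bbA)$.

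First I would prove that $R(f)$ restricted to $L^2_{cusp}(G(k)\setminus G(\bbA))$ is Hilbert--Schmidt. The idea is to replace $K_f$ by a \emph{cuspidal kernel} $K_f^{cusp}$ obtained by subtracting, for each class of proper $k$--parabolic subgroups $P$ and each $\gamma$, the contributions coming from $\gamma\in U_P(k)\cdot(\cdots)$, i.e.\ passing to the alternating sum over parabolics of the constant--term kernels. Because cuspidal $\varphi$ is orthogonal to all pseudo--Eisenstein series $E(\eta,P)$ introduced earlier in Section~\ref{prelim}, these subtractions do not change $R(f)\varphi$, so one still has $R(f)\varphi(x)=\int K_f^{cusp}(x,y)\varphi(y)\,dy$. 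Once $K_f^{cusp}$ is shown to be square--integrable on $(G(k)\setminus G(\bbA))\times(G(k)\setminus G(\bbA))$, the Hilbert--Schmidt property, and hence compactness of $R(f)\big|_{L^2_{cusp}}$, is immediate.

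The main obstacle is proving this $L^2$--bound on $K_f^{cusp}$, and this is where all the real work sits. The point is that a fundamental domain for $G(k)\setminus G(\bbA)$ is not compact; it fibers into Siegel sets going out along the cusps associated to proper parabolics $P$. The subtractions built into $K_f^{cusp}$ precisely cancel the terms responsible for non--integrability along these cusps, so one reduces to showing rapid decay of $K_f^{cusp}$ in the direction of each Siegel set. This requires reduction theory for $G(k)\setminus G(\bbA)$, together with the standard estimates on the geometric side of the trace formula bounding the $\gamma$--sum on Siegel sets. This is essentially the Gelfand--Piatetski-Shapiro argument, worked out in general by Langlands and by M\oe{}glin--Waldspurger, and I would cite that framework rather than redo the estimates.

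Once compactness of $R(f)\big|_{L^2_{cusp}}$ is in hand for every $f\in C_c^\infty(G(\bbA))$, the decomposition follows from an abstract lemma: if $\pi$ is a unitary representation of a locally compact group $H$ on a Hilbert space $\calH$ and $\pi(f)$ is compact for every $f\in C_c^\infty(H)$, then $\calH$ is a Hilbert direct sum of irreducible subrepresentations with finite multiplicities. One proves this by applying the spectral theorem to the self--adjoint compact operators $\pi(f*\tilde f)$ (where $\tilde f(g)=\overline{f(g^{-1})}$), using an approximate identity in $C_c^\infty(H)$ to exhaust $\calH$ by eigenspaces, and invoking Schur's lemma to bound the multiplicities.
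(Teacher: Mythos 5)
The paper does not prove Theorem~\ref{thm-cusp-adel}: it records it as a standard fact from the representation theory of reductive groups over number fields, referring implicitly to the classical literature. Your outline is the standard Gelfand--Piatetski-Shapiro/Langlands proof and is essentially correct as a sketch: realize $R(f)$ as an integral operator with kernel $K_f(x,y)=\sum_{\gamma\in G(k)}f(x^{-1}\gamma y)$, pass to a modified kernel obtained by subtracting constant-term kernels in the $y$-variable (which does not change $R(f)\varphi$ when $\varphi$ is cuspidal, precisely because $\varphi_P=0$), establish square-integrability of the modified kernel on the product of fundamental domains via reduction theory and decay on Siegel sets, and conclude $R(f)\big|_{L^2_{cusp}}$ is Hilbert--Schmidt; then invoke the abstract lemma that a unitary representation on which all operators $\pi(f)$, $f\in C_c^\infty$, are compact decomposes discretely with finite multiplicities. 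The one place where you should be a bit more careful in a full write-up is the combinatorics of the subtraction for a group of general rank: a single constant-term subtraction suffices for rank one, but for higher-rank groups one needs the inclusion--exclusion over standard parabolics (which you do gesture at with the alternating sum), and the Siegel-set estimates must be organized accordingly. Since these details are all in the sources you cite (Godement, Langlands, M\oe glin--Waldspurger), there is no gap beyond what you already flag as literature-dependent.
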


\section{Fourier Coefficients and Non--vanishing of Poincar\' e Series}\label{fc}

We begin the section with the following standard definition (see \cite{sh0}, Section 3 for generic case).
Let $U$ be a unipotent $k$-subgroup of $G$. Let $\psi: U(k)\setminus U(\mathbb A)\longrightarrow \mathbb C^\times$
be a (unitary) character. We warn the reader that $\psi$ might be trivial. As with the constant term recalled in Section \ref{prelim},
 for $\varphi\in  L^2(G(k)\setminus G(\bbA))$, the integral
\begin{equation}\label{fc-0}
{\mathcal F}_{(\psi, U)}(\varphi)(g)=\int_{U(k)\setminus U(\mathbb A)}\varphi(ug)\overline{\psi(u)} du
\end{equation}
converges almost everywhere  for  $g\in G(\mathbb A)$. We say that $\varphi$ is
 $(\psi, U)$--generic if ${\mathcal F}_{(\psi, U)}(\varphi)\neq 0$ (a.e.) for $g\in G(\mathbb A)$. 

It follows from (\ref{fc-0}) that
\begin{equation}\label{fc-00}
  {\mathcal F}_{(\psi, U)}(\varphi)(ug)=\psi(u){\mathcal F}_{(\psi, U)}(\varphi)(g), \ \ u\in U(\mathbb A),
  \ \text{(a.e.) for $ g\in G(\mathbb A)$}.
\end{equation}

The space defined by 
$$
L^2_{ \text{$(\psi, U)$--degenerate}}(G(k)\setminus G(\bbA))=\left\{\varphi \in
L^2(G(k)  \setminus G(\bbA)); \ \ \text{$\varphi$ is not $(\psi, U)$--generic} \right\}.
$$
is closed and $G(\mathbb A)$--invariant.  The later is obvious, while the former follows  as in Section \ref{prelim}  
where we discussed
$L^2_{cusp}(G(k)\setminus G(\bbA))$. Indeed, we let
\begin{equation}\label{fc-0011}
E(\eta)(g)=\sum_{U(k)\setminus G(k)} \eta(\gamma g), \ \ g\in G(\mathbb A),
\end{equation}
where $\eta\in C^\infty(G(\mathbb A))$ satisfies the following conditions:
\begin{itemize}
\item $\eta(ug)=\psi(u)\eta(g)$, \ \  $u\in U(\mathbb A), \  g\in G(\mathbb A)$,
\item there exists a compact subset $C\subset G(\mathbb A)$ (depending on $\eta$) such that
  $\supp{(\eta)}\subset U(\mathbb A)\cdot C$.
\end{itemize}
Since $U(k)\setminus U(\mathbb A)$ is compact, we have that $\eta$
is compactly supported modulo $U(k)$. Consequently, we have 
 $E(\eta)\in L^2(G(k)  \setminus G(\bbA))$.  Finally, $\varphi$ is not $(\psi, U)$--generic if and only 
if it is orthogonal to all
 $E(\eta)$.
This follows immediately from the following integration formula:
\begin{equation}\label{fc-0012}
\langle \varphi, E(\eta)\rangle= \int_{U(\mathbb A)\setminus G(\mathbb A)} {\mathcal F}_{(\psi, U)}(\varphi)(g)\overline{\eta(g)}dg
\end{equation}
whose simple proof we leave as an exercise to the reader.

After these preliminary claims, we turn our attention to construction of 
 compactly supported Poincar\' e series  having non--zero $(\psi,  U)$--Fourier coefficients. We need them
in Sections \ref{gl} and \ref{newforms}  for the proof of our main results.

\begin{Lem}\label{fc-1} Let $G$ be a semisimple group defined over $k$. Let $U$ be a unipotent $k$-subgroup.
Let $\psi: U(k)\setminus U(\mathbb A)\longrightarrow \mathbb C^\times$
be a (unitary) character.  Let $S$ be a finite set of places, containing $V_\infty$, large enough such that $G$, $U$,
 and $\psi$ are  unramified for $v\not\in S$ (in particular, $\psi_v$ is trivial on $U(\mathcal O_v)$).
Assume that for each $v\in V_f$ we have  $f_v \in C_c^\infty(G(k_v))$ and an open compact subgroup $L_v$ such that 
\begin{itemize}
\item[(a)] $f_v=1_{G(\calO_v)}$ and  $L_v=G(\calO_v)$ for all $v\not\in S$,
\item[(b)] for $v\in  S-V_\infty$, we have
$\int_{U(k_v)} f_v(u_v)\overline{\psi_v(u_v)} du_v\neq 0$,
\item[(c)] and, for each $v\in S-V_\infty$, we require that $f_v$ is right--invariant under $L_v$.
\end{itemize}
Then, we can find $f_\infty \in
C_c^\infty(G_\infty)$ such that when we let  $f=f_\infty\otimes_{v\in V_f} f_v$  the following holds:
\begin{itemize}
\item[(i)]  ${\mathcal F}_{(\psi, U)}(P(f))(1)\neq 0$. In particular, $P(f)$ is a non--zero element of
$L^2(G(k)\setminus G(\mathbb A))^L$, where the open compact subgroup $L$ is defined by $L=\prod_{v\in V_f}L_v$.
\item[(ii)] $P(f)|_{G_\infty}\neq 0$ and is an element of $L^2(\Gamma_L\setminus G_\infty)$ where $\Gamma_L$
is a congruence subgroup
which corresponds to $L$ from (i) (see (\ref{discreteS-01})).
\item[(iii)] $\int_{\Gamma_L\cap U_\infty\setminus U_\infty}P(f)(u_\infty)\overline{\psi_\infty(u_\infty)}du_\infty\neq 0$.
\end{itemize}
\end{Lem}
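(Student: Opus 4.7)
The plan is to unfold ${\mathcal F}_{(\psi, U)}(P(f))(1)$ following the approach of (\cite{Muic1}, Theorem~4.2), isolate the contribution of the identity coset by choosing $f_\infty$ supported in a sufficiently small neighborhood of the identity in $G_\infty$, and then derive (ii) and (iii) from (i) using the right $L$--invariance of $P(f)$ together with strong approximation for the unipotent $k$--group $U$.

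For (i), I would first unfold: since $\psi|_{U(k)}=1$, the standard manipulation turns the sum--integral defining the Fourier coefficient into
$$
{\mathcal F}_{(\psi, U)}(P(f))(1) \;=\; \sum_{[\gamma]\in G(k)/U(k)}\ \int_{U(\bbA)} f(\gamma u)\, \overline{\psi(u)}\, du,
$$
and each summand factors as a product of local integrals. For the trivial coset $[\gamma]=[e]$, assumption (b) at $v\in S-V_\infty$ together with the computation $\int_{U(k_v)} 1_{G(\calO_v)}(u)\overline{\psi_v(u)}\,du=\mathrm{vol}(U(\calO_v))\neq 0$ for $v\notin S$ (which uses (a) and $\psi_v|_{U(\calO_v)}=1$) makes every finite--place factor nonzero, so the identity contribution equals a nonzero constant times $\int_{U_\infty} f_\infty(u)\overline{\psi_\infty(u)}\,du$.

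Once $f_\infty$ is compactly supported, $\supp(f)$ is compact in $G(\bbA)$; choosing a compact fundamental domain $D$ for $U(k)\setminus U(\bbA)$, the finite set $G(k)\cap\supp(f)\cdot D$ is a system of representatives for all contributing cosets, so only finitely many $[\gamma_1],\ldots,[\gamma_N]$ contribute besides $[e]$. The separation step is then crucial: each $\gamma_j\notin U(k)$, and since $U$ is a $k$--subgroup, Galois descent gives $\gamma_j\notin U(k_v)$ for every $v$, whence $\gamma_{j,\infty}\notin U_\infty$. Because $U_\infty$ is closed in $G_\infty$, the closed sets $\gamma_{j,\infty}U_\infty$ do not contain $e$, so I can pick an open neighborhood $\Omega_\infty$ of $e$ in $G_\infty$ disjoint from each of them and then choose $f_\infty\in C_c^\infty(\Omega_\infty)$ with $\int_{U_\infty} f_\infty(u)\overline{\psi_\infty(u)}\,du\neq 0$, via a product bump adapted to a local decomposition of $G_\infty$ transversal to $U_\infty$ at $e$. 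All terms with $j\geq 1$ then vanish, and (i) follows.

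For (iii) and (ii), I would invoke strong approximation for the unipotent $U$, giving $U(\bbA)=U(k)\cdot U_\infty\cdot K_U$ with $K_U:=L\cap U(\bbA_f)$ open compact. A translation--invariance argument applied to the local integral in (b), together with (c), forces $\psi_v|_{L_v\cap U(k_v)}=1$ for $v\in S-V_\infty$; combined with the unramified property outside $S$, this yields $\psi_f|_{K_U}=1$. Using the right $L$--invariance of $P(f)$ (from (a) and (c)), the global Fourier integral in (i) then unfolds to
$$
{\mathcal F}_{(\psi, U)}(P(f))(1) \;=\; \mathrm{vol}(K_U)\cdot \int_{\Gamma_L\cap U_\infty\setminus U_\infty} P(f)(u_\infty)\,\overline{\psi_\infty(u_\infty)}\,du_\infty,
$$
proving (iii). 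For (ii), the $L$--invariance of $P(f)$ together with the finiteness of $G(k)\setminus G(\bbA)/(G_\infty L)$ identifies $P(f)|_{G_\infty}$ as an element of $L^2(\Gamma_L\setminus G_\infty)$, and (iii) provides its non--vanishing. The main obstacle is the finiteness--and--separation step for the non--identity cosets: finiteness is routine from discreteness of $G(k)$, but the separation rests on the rationality input that places every $\gamma_{j,\infty}$ outside $U_\infty$, and it is precisely this rationality that allows the Archimedean support of $f_\infty$ to be shrunk so as to kill the unwanted cosets.
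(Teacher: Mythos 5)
Your proof is essentially the paper's argument, and it is correct in outline: unfolding the Fourier coefficient, reducing the contributing cosets to finitely many by compactness, using rationality of $U$ to guarantee that any non--trivial contributing $\gamma$ has $\gamma_\infty\notin U_\infty$, and shrinking $\mathrm{supp}(f_\infty)$ to kill those terms; then (iii) via the strong--approximation/volume identity and (ii) from the $L$--invariance of $P(f)$.

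The one place you should be more careful is the circularity in the Archimedean step: you first determine the finite set $\{[\gamma_1],\dots,[\gamma_N]\}$ from $\mathrm{supp}(f)$, which already includes $\mathrm{supp}(f_\infty)$, and then you re--choose $f_\infty$ supported in $\Omega_\infty$. As written, the set of contributing cosets for the new $f_\infty$ is \emph{a priori} different from the one you separated from. This is easily patched --- fix a compact neighbourhood $\Omega_0\ni e$ first, determine the (finite) set of $\gamma$ with $\gamma U(k)$ meeting $\bigl(\Omega_0 D_\infty^{-1}\bigr)\times\prod_{v\in S-V_\infty}\mathrm{supp}(f_v)(L_v\cap U(k_v))^{-1}\times\prod_{v\notin S}G(\mathcal O_v)$, and only then shrink $\Omega_\infty\subset\Omega_0$ to avoid the finitely many closed sets $\gamma_{j,\infty}U_\infty$; any $f_\infty$ with $\mathrm{supp}(f_\infty)\subset\Omega_\infty$ then has its contributing cosets among those already controlled --- but it should be stated. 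The paper handles precisely this point by first proving $G(k)\cap D\subset U(k)$ with $D$ having the \emph{fixed} Archimedean slot $C_\infty^{-1}$, then enlarging to $V'_\infty$ using discreteness of $G(k)$, and finally choosing $V_\infty$ with $V_\infty C_\infty^{-1}\subset V'_\infty$; this two--step shrinking is exactly the fix your sketch needs. Two further small remarks: your derivation of the volume identity for (iii) is the content the paper cites from (Mui\'c, Lemma 3.3), and is correct; and your route to (ii) via finiteness of $G(k)\backslash G(\bbA)/G_\infty L$ (Borel's class--number finiteness) is a valid alternative to the paper's appeal to (Mui\'c, Proposition 3.2), which instead exhibits $P(f)|_{G_\infty}$ directly as a compactly supported Poincar\'e series for $\Gamma_L$.
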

\begin{proof}  Since $U(k)\setminus U(\mathbb A)$ is compact, there exists a compact set $C\subset U(\mathbb A)$ such that
$U(\mathbb A)=U(k)C$. We explain how we can choose this set more precisely. First, by
the strong approximation, we have
$$
U(\mathbb A_f)=U(k)\left(L\cap  U(\mathbb A_f)\right).
$$
We consider the decomposition
$$
U(\mathbb A)=U_\infty\times U(\mathbb A_f),
$$
with $U(k)$ diagonally embedded.  Then, we define the continuous map

$$
U_\infty\times \left(L\cap  U(\mathbb A_f)\right)\longrightarrow
U(k)\setminus U(\mathbb A)
$$
given by
$$
(u_\infty, l)\mapsto U(k)(u_\infty, l).
$$
By the strong approximation, this map is surjective and it induces a homeomorphism of topological
spaces
$$
\Gamma_L\cap U_\infty\setminus U_\infty \times \left(L\cap  U(\mathbb A_f)\right)\longrightarrow
U(k)\setminus U(\mathbb A).
$$
This implies that $\Gamma_L\cap U_\infty\setminus U_\infty$ is compact. In particular, we can select a compact set
$C_\infty  \subset U_\infty$ such that
$$
U_\infty= U(k)C_\infty.
$$
Hence, this implies that we can select a compact set
\begin{equation}\label{fc-2}
C=C_\infty \times \left(L\cap  U(\mathbb A_f)\right).
\end{equation}
in order to obtain $U(\mathbb A)=U(k)C$.

Since $G(k)$ is discrete in $G(\mathbb A)$ and the set (see (\ref{fc-2}))
\begin{equation}\label{fc-3}
D\overset{def}{=}C^{-1}_\infty \times \prod_{v\in S-V_\infty}  \supp{(f_v)} \times \prod_{v\not\in S}
G(\mathcal O_v)
\end{equation}
compact, we have  that the set $G(k)\cap D$ is finite. We claim that
\begin{equation}\label{fc-4}
G(k)\cap D \subset U(k).
\end{equation}
Indeed, considering the projection to the first factor in
(\ref{fc-3}), we find that
$$
G(k)\cap D \subset C^{-1}_\infty
$$ 
when we consider $G(k)$ as a subgroup of $G_\infty$. But
$C^{-1}_\infty \subset U_\infty $. So that
$$ G(k)\cap D \subset C^{-1}_\infty\cap G(k)\subset U_\infty\cap G(k)=U(k).
$$
This proves (\ref{fc-4}). Next,  we can find an open set $V'_\infty$ in $G_\infty$
containing $C^{-1}_\infty$ such that
$$
G(k)\cap \left(V'_\infty \times \prod_{v\in S-V_\infty}  \supp{(f_v)} \times \prod_{v\not\in S}
G(\mathcal O_K)\right)=G(k)\cap D.
$$
We select an open neighborhood $V_\infty$ of identity in $G_\infty$ such that
$V_\infty\cdot C^{-1}_\infty \subset  V'_\infty$.
In particular,
\begin{equation}\label{fc-5}
G(k)\cap \left(V_\infty\cdot C^{-1}_\infty  \times \prod_{v\in S-V_\infty}
\supp{(f_v)} \times \prod_{v\not\in S}
G(\mathcal O_K)\right)=G(k)\cap D \subset U(k).
\end{equation}
Next, we select $f_\infty \in C_c^\infty(G_\infty)$ such that
$\supp{(f_\infty)}\subset V_\infty$,  and
\begin{equation}\label{fc-6}
\int_{U_\infty} f_\infty(u_\infty)\overline{\psi_\infty(u_\infty)} du_\infty\neq 0.
\end{equation}
This can be achieved by requiring that support of $f_\infty$ is small enough so that it is contained in the image of the restriction of $\exp$ 
to a small neighborhood of $0\in   \mathfrak g_\infty$ where that restriction is a diffeomorphism onto its image. Then, we can transfer statement (\ref{fc-6}) 
to the Lie algebra by writting the the Haar measure on $U_\infty$ in local  coordinates (it as differential form of top degree which never vanish). The 
obtained claim is easy to verify directly.

Now, we are ready to prove (i). We compute
$$
{\mathcal F}_{(\psi, U)}(P(f))(1)= \int_{U(k)\setminus U(\mathbb A)} \sum_{\gamma\in G(k)}
f(\gamma\cdot u)\overline{\psi(u)} du.
$$
We reduce above expression using the following observation:
\begin{equation}\label{fc-10000}
f(\gamma\cdot u)\neq 0, \ \  \text{for some $u\in U(\mathbb A)$ and $\gamma\in  G(k)$, implies } \ \ 
\gamma \in U(k).
\end{equation}
Let us prove (\ref{fc-10000}).  Using $U(\mathbb A)=U(k)C$,
we can write $u=\delta c$ where $\delta\in U(k)$ and $c\in C$. Since $f(\gamma\cdot u)\neq 0$, we
obtain
$$
\gamma \delta \in G(k)\cap \supp{(f)}\cdot C^{-1}.
$$
The key observation is that the assumptions (a) and (c) from the statement of the lemma as well as (\ref{fc-2})
imply
$$
\supp{(f)}\cdot C^{-1}=\supp{(f_\infty)}\cdot C^{-1}_\infty\times  \prod_{v\in S-V_\infty} \supp{(f_v)}
\times \prod_{v\not\in S} G(\mathcal O_v)
$$
Using this and  $\supp{(f_\infty)}\subset V_\infty$, (\ref{fc-5}) implies that
$$
G(k)\cap \supp{(f)}\cdot C^{-1}\subset U(k).
$$
Which shows  $\gamma\delta \in U(k)$. Hence $\gamma\in U(k)$. This proves (\ref{fc-10000}).

Using (\ref{fc-10000}), (b) from the statement of the lemma, and (\ref{fc-6}),
above integral becomes
\begin{align*}
{\mathcal F}_{(\psi, U)}(P(f))(1)&= \int_{U(k)\setminus U(\mathbb A)} \sum_{\gamma\in
  U(k)} f(\gamma\cdot u)\overline{\psi(u)} du \\ &=\int_{U(\mathbb A)}
f(u)\overline{\psi(u)} du\\ &=\left(\int_{U(k_\infty)}
f_\infty(u_\infty)\overline{\psi_\infty(u_\infty)} du_\infty\right) \prod_{v\in
  S-V_\infty} \int_{U(k_v)} f_v(u_v)\overline{\psi_v(u_v)} du_v \neq 0.
\end{align*}
This implies (i) in view of the assumption (c).

To prove (ii) and (iii), we recall that in (\cite{Muic1}, Proposition
3.2) we prove that $P(f)|_{G_\infty}$ is a compactly supported
Poincar\' e series on $G_\infty$ for $\Gamma_L$. This shows that it belongs
to $L^2(\Gamma_L\setminus G_\infty)$.  In order to complete the proofs
of (ii) and (iii), we observe that (b) implies that each $\psi_v$ is
invariant under $L_v\cap U(k_v)$. This means that $P(f)|_{U(\mathbb A_f)}\left(\otimes_{v\in V_f}
\overline{\psi_v}\right)$ is right invariant under $L\cap U(\mathbb A_f)$. This enables us to apply
(\cite{Muic1}, Lemma  3.3):
$$
{\mathcal F}_{(\psi, U)}(P(f))(1)=vol_{U(\mathbb A_f)}\left(L\cap U(\mathbb A_f)\right)\cdot
\int_{\Gamma_{L}\cap U_\infty  \setminus
U_{\infty}}P(f)(u_\infty)\overline{\psi_\infty(u_\infty)}du_\infty.
$$
In view of (i), this proves (ii) and (iii).
\end{proof}

\section{Local Generic Representations} \label{lnd}
In this section we discuss local generic
representation. We drop index $v$, and let $k$ be a non--Archimedean local field.
We assume that $G$ is a semisimple group defined over $k$. We write $G$ for $G(k)$ in
order to simplify notation.
Similarly, we do for subgroups of $G$. The goal of this section is to explain how to construct functions satisfying
Lemma  \ref{fc-1} (b) using theory of Bernstein \cite{Be1}. The reader may also want to consult
(\cite{Muic1}, Section 5). In the present section we refine  some of the results proved there
 for our particular application.

We introduce some notation following standard references \cite{BZ} and \cite{BZ1}.
We consider the category of all smooth complex representations of $G$. For a smooth representation $\pi$,
we denote $\wit{\pi}$ the smooth dual of $\pi$. We call it a contragredient representation.

Let $P$ be a parabolic subgroup
of $G$ given by a Levi decomposition $P=MU_P$, where
$M$ is a Levi factor and $U_P$ is the unipotent radical of $P$.
If $\sigma$ is a smooth representation of $M$ extended trivially
across $U_P$ to a representation of $P$, then we denote  the normalized
induction by $\Ind_{P}^{G}(\sigma)$. If $\pi$ is a smooth
 representation of  $G$, then we denote by $Jacq^{P}_{G}(\pi)$
a normalized Jacquet module of $\pi$ with respect to $P$. When restricted to $U_P$,
 $Jacq^{P}_{G}(\pi)$ is a direct sum of (possibly infinitely many)
copies of a trivial representation.  Therefore, when $M$ is fixed,
 we write $Jacq^{M}_{G}(\pi)=Jacq^{P}_{G}(\pi)$. Let $|\ |$ be an absolute value on $k$.
Let $M^0$ be the subgroup of $M$ given as the
 intersection of the kernels of all characters $m\mapsto
 |\chi(m)|$, where $\chi$ ranges over the group of all
 $k$--rational algebraic characters $M\rightarrow
 k^\times$. We say that a character $\chi: M\rightarrow
 \mathbb C^\times$ is unramified if it is trivial on $M^0$. We say that
 an irreducible representation $\rho$ of $M$ is supercuspidal if
$Jacq^{Q}_{M}(\rho)=0$ for all proper parabolic subgroups $Q$
 of $M$.

We recall Bernstein's decomposition of the
category of smooth complex
representations of $G$ \cite{Be1}. On the set of
pairs $(M, \rho)$, where $M$ is a Levi subgroup of
$G$  and $\rho$ is a smooth irreducible supercuspidal representation of $M$,
we introduce the relation of equivalence as follows:
$(M, \rho)$  and $(M', \rho')$  are equivalent if we can find $g\in
G$ and  an unramified character  $\chi$  of $M'$ such that $M'=gMg^{-1}$ and $\rho'\simeq
\chi\rho^{g}$ i.e.,
$$
\rho^{g}(m')=\chi(m')\rho(g^{-1}m'g), \ \ m'\in M'.
$$ We write $\left[M, \rho\right]$ for the Bernstein's equivalence
class associated to a pair $(M, \rho)$. We say that a class $\left[M,
  \rho\right]$ is supercuspidal if $M=G$. The contragredient Bernstein's class $\wit{\mathfrak M}$ of
the class $\mathfrak M=\left[M, \rho\right]$ is the class $\left[M, \wit{\rho}\right]$

Let $V$ be a smooth complex representations of $G$. Let
$$
V(\left[M, \rho\right])
$$
be the largest smooth submodule of $V$ such that every irreducible
subquotient of $V$ is a subquotient of  $\Ind_{P}^{G}(\chi \rho)$,
for some unramified character $\chi$ of $M$. Here $P$ is an arbitrary
parabolic subgroup of $G$ containing $M$ as a Levi subgroup. The
fundamental result of Bernstein is the following decomposition:
$$
V=\oplus_{\mathfrak M} V(\mathfrak M),
$$
where $\mathfrak M$ ranges over all Bernstein equivalence classes.

 We say that a smooth
representation $\pi$ of $G$ belongs to the class $\left[M,
  \rho\right]$ if the following holds:
$$
\pi(\left[M, \rho\right])=\pi.
$$
It is obvious that any non--zero subquotient of $\pi$ belongs to  the same class.
It is well--known each irreducible representation $\pi$ belongs to a unique
Bernstein's class.

Now, we apply this theory to study generic representations. We consider the following very general set-up.
Later in the section we give examples.
 Let $U$ be any unipotent $k$--subgroup of $G$ and let
$\chi: U\longrightarrow \mathbb C^\times$ be a character. Since $U$ is a union of open compact subgroups, $\chi$ is
unitary. For the same reason, $U$ is unimodular. We form the two types of induced representations (see
(\cite{BZ, BZ1}):

\vskip .2in

\noindent{1)} $\Ind_{U}^{G}(\chi)$ on the space of all functions $f:
G\longrightarrow \mathbb C$ satisfying $f(ug)=\chi(u)f(g)$, for all
$g\in G$, $u\in U$, and there exists an open--compact subgroup $L$
such that $f(gl)=f(g)$, for all $g\in G$, $l\in L$.

\vskip .2in

\noindent{2)}  $\cInd_{U}^{G}(\chi)$ on the space of all  functions $f\in \Ind_{U}^{G}(\chi)$
which are compactly supported modulo $U$.

\vskip .2in
The contragredient  of the representation  $\cInd_{U}^{G}(\overline{\chi})$ is
$\Ind_{U}^{G}(\chi)$. The canonical pairing

$$ \cInd_{U}^{G}(\overline{\chi})\times
\Ind_{U}^{G}(\chi)\longrightarrow \mathbb C
$$
is
 given by
$$
\langle f, \ F\rangle=\int_{U\setminus G}  f(g)F(g)dg.
$$
\vskip .2in
Let $\pi$ be a smooth representation of $G$. Let $V$ be the space on which $\pi$ acts. Let $V(U, \chi)$ to be the span
of all vectors $\pi(u)v-\chi(u)v$, $v\in V$. Put $r_{U, \chi}(V)=V/V(U, \chi)$. It is the largest quotient of $V$ on which
$U$ acts as $\chi$. The assignment $V\mapsto r_{U, \chi}(V)$ can be considered as a functor from the category of smooth $G$--representations to the
category of smooth $U$--representations. Since $U$ is the union of open compact subgroups, the functor is exact
(\cite{BZ}, Proposition 2.3.5).
The following definition is standard. Let $\pi$ be a smooth representation of $G$. We say that $\pi$  is
 $(\chi, U)$--generic if
$$
\Hom_{G}\left(\pi, \ \Ind_{U}^{G}(\chi)\right)\neq 0.
$$
By Frobenius reciprocity, this is equivalent to
$$
r_{U, \chi}(\pi)\neq 0.
$$
\

\begin{Def}\label{lnd-0}
Let $\mathfrak M$ be a Bernstein's class. We say that $\mathfrak M$
is $(\chi, U)$--generic if  there exists an irreducible representation in this class which is
 $(\chi, U)$--generic. 
\end{Def}

In the settings of Definition \ref{lnd-0}, we have the following simple result:
\begin{Lem}\label{lnd-0a}  Let $\mathfrak M$ be a Bernstein's class. Then we have the following:
  \begin{enumerate} 
  \item[(i)] If  $\cInd_{U}^{G}(\chi)(\mathfrak M)\neq 0$, then $\Ind_{U}^{G}(\chi)(\mathfrak M)\neq 0$.
\item[(ii)] If the class $\mathfrak M$ is $(\chi, U)$--generic, then $\Ind_{U}^{G}(\chi)(\mathfrak M)\neq 0$.
\item[(iii)] The class $\mathfrak M$ is $(\chi, U)$--generic
if and only if  $\Ind_{U}^{G}(\chi)(\mathfrak M)$ has an irreducible subrepresentation, or,
equivalently, $\cInd_{U}^{G}(\overline{\chi})(\wit{\mathfrak M})$ has an irreducible quotient.

\item[(iv)] If supercuspidal representation $\rho$ is $(\chi, U)$--generic, then 
  $\cInd_{U}^{G}(\chi)([G, \rho])\neq 0$, and  $\cInd_{U}^{G}(\overline{\chi})([G, \wit{\rho}])\neq 0$.
\item[(v)] Conversely, if $\Ind_{U}^{G}(\chi)([G, \rho])\neq 0$, where $\rho$ is a supercuspidal representation,  
then $\rho$ is $(\chi, U)$--generic.
\end{enumerate}
\end{Lem}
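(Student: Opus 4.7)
The plan is to handle the five assertions in order, using functoriality of the Bernstein decomposition, the two forms of Frobenius reciprocity for $\Ind_U^G$ and $\cInd_U^G$, and the following structural fact. Since $G$ is semisimple it admits no non-trivial unramified characters, so for a supercuspidal $\rho$ the Bernstein block $[G,\rho]$ contains $\rho$ as its unique irreducible representation; $\rho$ is both projective and injective in this block, which is therefore semisimple (equivalent to the category of complex vector spaces via $V\mapsto \Hom_G(\rho,V)$), and every object is a direct sum of copies of $\rho$.

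Parts (i)--(iii) are formal. For (i), apply the exact functor $V\mapsto V(\mathfrak M)$ to the inclusion $\cInd_U^G(\chi)\hookrightarrow \Ind_U^G(\chi)$. For (ii), choose an irreducible $(\chi,U)$-generic $\pi\in\mathfrak M$ supplied by the definition of genericity of the class; Frobenius reciprocity yields a non-zero $G$-map $\pi\to \Ind_U^G(\chi)$ and functoriality of the Bernstein decomposition places its image inside $\Ind_U^G(\chi)(\mathfrak M)$. Part (iii): the forward direction sharpens (ii) because a non-zero map from an irreducible is injective; conversely, any irreducible subrepresentation of $\Ind_U^G(\chi)(\mathfrak M)$ belongs to $\mathfrak M$ and is $(\chi,U)$-generic via the embedding it provides. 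The equivalence with $\cInd_U^G(\overline\chi)(\wit{\mathfrak M})$ having an irreducible quotient is obtained by dualizing: $\Ind_U^G(\chi)$ is the smooth contragredient of $\cInd_U^G(\overline\chi)$, the contragredient functor swaps subobjects with quotients, and the Bernstein component for $\mathfrak M$ is exchanged with the one for $\wit{\mathfrak M}$.

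For (iv), the $(\chi,U)$-genericity of $\rho$ produces a Whittaker functional $\ell\in\Hom_U(\rho|_U,\chi)$; the map $v\mapsto W_v$ with $W_v(g):=\ell(\rho(g)v)$ embeds $\rho\hookrightarrow \Ind_U^G(\chi)$. The main input is the standard compact-support-modulo-$U$ property of Whittaker functions of supercuspidal representations, a generalisation of the classical compact-support-modulo-centre of matrix coefficients of supercuspidals; it places the image inside $\cInd_U^G(\chi)$, yielding $\cInd_U^G(\chi)([G,\rho])\neq 0$. For the second statement, take the smooth contragredient of this embedding to obtain a surjection $\Ind_U^G(\overline\chi)\twoheadrightarrow\wit\rho$, which forces $\Ind_U^G(\overline\chi)([G,\wit\rho])\neq 0$. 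By semisimplicity of the supercuspidal block $[G,\wit\rho]$, being a quotient is equivalent to being a subrepresentation, so $\wit\rho$ is $(\overline\chi,U)$-generic. Applying the same Whittaker-function construction to $\wit\rho$ (supercuspidal and $(\overline\chi,U)$-generic) then yields $\cInd_U^G(\overline\chi)([G,\wit\rho])\neq 0$.

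For (v), the non-zero smooth subrepresentation $\Ind_U^G(\chi)([G,\rho])$ sits inside the semisimple block $[G,\rho]$, hence contains $\rho$ as a direct summand; Frobenius reciprocity converts the resulting embedding $\rho\hookrightarrow \Ind_U^G(\chi)$ into a non-zero element of $\Hom_U(\rho|_U,\chi)$, so $\rho$ is $(\chi,U)$-generic. The main obstacle is the compact-support property invoked in (iv): while classical for $U$ the unipotent radical of a Borel subgroup and $\chi$ non-degenerate, its extension to arbitrary $U$ and $\chi$ requires either an appeal to the corresponding generalisation in the literature, or a Bernstein-theoretic argument showing that the supercuspidal component of $\Ind_U^G(\chi)$ is realised as the image of an idempotent-convolution projector which by a direct support estimate lands inside $\cInd_U^G(\chi)$.
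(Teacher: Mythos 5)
Parts (i), (ii), (iii), and (v) of your proof follow essentially the same route as the paper: exactness of the Bernstein projector, Frobenius reciprocity, the identification $\bigl(\cInd_U^G(\overline{\chi})\bigr)^\sim \cong \Ind_U^G(\chi)$ together with $\pi \cong \wit{\wit{\pi}}$ for admissible $\pi$, and the semisimplicity of a supercuspidal block of a semisimple group --- which is precisely the repackaging of Casselman's projectivity of supercuspidals that the paper invokes directly in (v). Your preliminary observation that, for semisimple $G$, the block $[G,\rho]$ has $\rho$ as its unique irreducible and is therefore a semisimple category is correct and useful, and the dualization in (iii) is handled the same way as in the paper.

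Part (iv) is where you genuinely diverge, and where the gap you flag yourself is real. You construct the Whittaker map $v\mapsto W_v$, $W_v(g)=\ell(\rho(g)v)$, into $\Ind_U^G(\chi)$ and then invoke a ``compact-support-modulo-$U$'' property of Whittaker functions of supercuspidals to land in $\cInd_U^G(\chi)$. That property is classical only when $U$ is the full unipotent radical of a Borel and $\chi$ is non-degenerate; here $U$ is an arbitrary unipotent $k$-subgroup and $\chi$ an arbitrary (possibly trivial or degenerate) character, so this is not a citation-ready fact. The paper avoids the issue: from $\Hom_G(\rho,\Ind_U^G(\chi))\neq 0$, the duality $\Hom_G(A,\wit{B})\cong\Hom_G(B,\wit{A})$ produces a non-zero, hence surjective, map $\cInd_U^G(\overline{\chi})\to\wit{\rho}$; projectivity of the supercuspidal $\wit{\rho}$ splits it, giving $\wit{\rho}\hookrightarrow\cInd_U^G(\overline{\chi})$ and thus $\cInd_U^G(\overline{\chi})([G,\wit{\rho}])\neq 0$; passing through $\Ind$, dualizing once more, and applying projectivity of $\rho$ yields $\rho\hookrightarrow\cInd_U^G(\chi)$. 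The ``Bernstein-theoretic argument'' you gesture at in your last sentence is exactly this projectivity-plus-duality chain; had you led with it, the explicit Whittaker detour would have been unnecessary. In fact, the logical dependency runs the other way from what your write-up suggests: it is the projectivity argument that \emph{proves} the compact-support-modulo-$U$ property of these generalized Whittaker functions for supercuspidals, rather than the property being an independent input.
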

\begin{proof} The claims (i) and (ii), and the first claim in (iii) are obvious.
For the second claim in (iii), we note that if $\pi$ is an admissible representation, then
$$
\pi\simeq \wit{\wit{\pi}}.
$$
So, since
$$
\left(\cInd_{U}^{G}(\overline{\chi})\right)^\sim\simeq \Ind_{U}^{G}(\chi),
$$
we obtain

$$
\Hom_{G}\left(\pi, \ \Ind_{U}^{G}(\chi)\right)
 \simeq \Hom_{G}\left(\wit{\wit{\pi}}, \ \left(\cInd_{U}^{G}(\overline{\chi})\right)^\sim\right)\simeq
\Hom_{G}\left(\cInd_{U}^{G}(\overline{\chi}), \ \ \wit{\pi}\right).
$$
From this observation,  the second claim easily follows.
Let us prove (iv). By our assumption
$$
\Hom_G\left(\rho, \ \Ind_{U}^{G}(\chi)\right)\neq 0.
$$
By computation in (iii), this implies that
\begin{equation}\label{lnd-1000}
\Hom_{G}\left(\cInd_{U}^{G}(\overline{\chi}),
\ \ \wit{\rho}\right)\neq 0.
\end{equation}
Since the group $G$ has finite center (being semisimple), $\rho$
is a projective object in the category of all smooth
representations of $G$. Thus, (\ref{lnd-1000}) implies
\begin{equation}\label{lnd-1001}
\Hom_{G}\left(\wit{\rho}, \ \
 \cInd_{U}^{G}(\overline{\chi})\right)\neq 0.
\end{equation}
This implies that
$$
\cInd_{U}^{G}(\overline{\chi})([G, \wit{\rho}])\neq 0.
$$
Next, obviously (\ref{lnd-1001}) implies
\begin{equation}\label{lnd-1002}
\Hom_{G}\left(\wit{\rho}, \ \
 \Ind_{U}^{G}(\overline{\chi})\right)\neq 0.
\end{equation}
So again, by the proof of (iii), we obtain
from (\ref{lnd-1002}) the following
$$
\Hom_{G}\left(\cInd_{U}^{G}(\chi),
\ \ \rho\right)\simeq
\Hom_{G}\left(\cInd_{U}^{G}(\chi),
\ \ \wit{\wit{\rho}}\right)\neq 0.
$$
Thus, finally applying the projectivity argument one
more time, we obtain
$$
\Hom_{G}\left(\rho, \ \
 \Ind_{U}^{G}(\chi)\right)\neq 0.
$$
This completes the proof of (iv). The claim (v) follows from the fact that $\rho$
is projective object in the category of smooth representations of $G$.
 \end{proof}

In the case of usual $\chi$--generic representations (see the text after the proof of Lemma \ref{lnd-1} below),
part (iv) has been proved earlier by Casselman and Shalika
(\cite{cs}, Corollary 6.5).

Now, we use Bernstein's theory to show existence of certain types of functions with non--vanishing Fourier coefficients.
This will be crucial in Section \ref{gl} for global applications.

Let $f\in C_c^\infty(G)$. Then, we define a Fourier coefficient of $f$ along $U$ with respect to $\chi$ as follows:
$$
{\mathcal F}_{(\chi, U)}(f)(g)=\int_{U}f(ug)\overline{\chi(u)}du, \ \ g\in G.
$$
Clearly
$$
{\mathcal F}_{(\chi, U)}(f)\in \cInd_{U}^{G}(\chi).
$$

\begin{Lem}\label{lnd-1}
Let $\mathfrak N$   be a  Bernstein's class which satisfies $\cInd_{U}^{G}(\chi)(\mathfrak N)\neq 0$.
(By Bernstein theory there exists at least one such class.)
Then, considering $C_c^\infty(G)$ as a  smooth module under right translations, there exists
$f\in C_c^\infty(G)(\mathfrak N)$ such that the Fourier coefficient ${\mathcal F}_{(\chi, U)}(f)$
is not identically equal to zero.
\end{Lem}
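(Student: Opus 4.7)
The plan is to realize the Fourier coefficient as a canonical $G$-equivariant surjection
\[
\mathcal{F}_{(\chi, U)} \colon C_c^\infty(G) \twoheadrightarrow \cInd_U^G(\chi),
\]
with $G$ acting by right translation on both sides, and to invoke functoriality of the Bernstein decomposition. Once the surjection is established, the central idempotent cutting out the block $\mathfrak N$ commutes with $\mathcal{F}_{(\chi, U)}$, so the map restricts to a surjection $C_c^\infty(G)(\mathfrak N) \twoheadrightarrow \cInd_U^G(\chi)(\mathfrak N)$; the hypothesis $\cInd_U^G(\chi)(\mathfrak N) \neq 0$ then provides a nonzero element of the target, and any preimage in $C_c^\infty(G)(\mathfrak N)$ is the required $f$.

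Well-definedness and $G$-equivariance of $\mathcal{F}_{(\chi, U)}$ are routine: the transformation law $\mathcal{F}_{(\chi, U)}(f)(u_0 g) = \chi(u_0)\,\mathcal{F}_{(\chi, U)}(f)(g)$ is a change of variable, right equivariance is immediate, and $\supp \mathcal{F}_{(\chi, U)}(f) \subset U \cdot \supp(f)$ is compact modulo $U$. Surjectivity I would establish by an explicit partition-of-unity construction: given $F \in \cInd_U^G(\chi)$, choose an open compact subgroup $L \subset G$ under which $F$ is right-invariant, write $\supp(F)$ as a finite disjoint union of double cosets $U g_i L$, pick an open compact subgroup $U_0 \subset U$ on which $\chi$ is trivial (possible since $\chi$ is smooth), and --- shrinking $L$ if necessary so that $g_i L g_i^{-1} \cap U \subset U_0$ for all $i$ --- take
\[
f \;=\; \frac{1}{\mathrm{vol}(U_0)} \sum_i F(g_i)\, \mathbf{1}_{U_0 g_i L} \;\in\; C_c^\infty(G).
\]
A direct computation, exploiting disjointness of distinct $(U,L)$-double cosets, yields $\mathcal{F}_{(\chi, U)}(f) = F$.

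The only mildly delicate step is this last computation: one has to check that the contribution of $\mathbf{1}_{U_0 g_i L}$ to the Fourier coefficient evaluated in a different double coset $U g_j L$ vanishes (automatic since distinct double cosets are disjoint), and that the contribution at a point $g_i l$ reproduces the value $F(g_i)$ --- which is precisely why one shrinks $L$ to ensure $g_i L g_i^{-1}\cap U \subset U_0$. The remaining step, transferring surjectivity to $\mathfrak N$-components via functoriality of the Bernstein decomposition, is purely formal.
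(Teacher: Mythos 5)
Your proof is correct and follows essentially the same route as the paper: realize $\mathcal{F}_{(\chi,U)}$ as a surjective $G$-intertwiner $C_c^\infty(G)\twoheadrightarrow\cInd_U^G(\chi)$, then invoke functoriality of the Bernstein decomposition to pass to the $\mathfrak N$-block. The paper simply cites the surjectivity as a standard fact, whereas you spell out the double-coset partition-of-unity construction — a useful elaboration, but not a different argument.
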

\begin{proof}  We observe the following simple fact. If $V$ and $W$ are smooth representations such that $W$ is a
quotient of $V$. Then, for any Bernstein's class $\mathfrak N$, $W(\mathfrak N)$ is a quotient of $V(\mathfrak N)$.
This follows immediately from the facts that $V$ and $W$ can be decomposed into a direct sum of modules
$V(\mathfrak N)$ and $W(\mathfrak N)$, and 
$V(\mathfrak N)$ is mapped into $W(\mathfrak N)$.

Next, it is the standard fact that the map
$C_c^\infty(G)\longrightarrow
\cInd_{U}^{G}(\chi)$ given by $f\longmapsto {\mathcal F}_{(\chi, U)}(f)$ is surjective intertwining operator of
right regular representations. This clearly implies the surjectivite map
$$
C_c^\infty(G)(\mathfrak N)\longrightarrow
\cInd_{U}^{G}(\chi)(\mathfrak N)
$$
for any Bernstein's class $\mathfrak N$. The lemma follows.
\end{proof}

Now, we list some examples for above theory. First of all, there are various trivial cases such as the
 case $U=\{1\}$ and $\chi$ is trivial, or the case $U=U_P$ and $\chi$ is trivial, for some proper
parabolic subgroup of $G$. The reader may want to compute generic representations in both cases
as an easy exercise.

For global applications (see \cite{Sh}), one instance
 of Lemma \ref{lnd-1} of the greatest importance. Assume that $G$ is quasi-split over $k$. Let $B=TU_B$
be a Borel
subgroups defined over $k$ given by its Levi decomposition, $T$ is a torus and $U_B$ is unipotent radical both defined over $k$. We let $U=U_B$ and assume that
$\chi$ is generic in the sense that $\chi$ is not trivial when restricted to any root subgroup $U_\alpha$, where $\alpha$ is a simple root corresponding to the choice of $B$. It is a 
fundamental result of Rodier \cite{ro, ro1} that $\dim r_{U, \chi}{(\pi)}\le 1$. Moreover, if $P=MU_P$ is a standard parabolic
subgroup of $G$ (i.e., $B\subset P$, $T\subset M$ a standard choice for Levi subgroup; the details can be found in (\cite{cs}, page 208)) and $\sigma$ is a an admissible  
representation of $M$, then we have an isomorphism of vector spaces \cite{cs, ro, ro1}
$$
 r_{U, \chi}{\left(\Ind_{P}^G(\sigma)\right)}\simeq  r_{U\cap M, \chi'}{(\sigma)},
 $$
where $\chi'$ is again a generic character defined by
\begin{equation}\label{lnd-2}
\chi'(u)=\chi(w^{-1}uw), \ \ u\in U\cap M.
\end{equation}
The element $w$ is any element of  $N_G(A)$, where $A$ is a split component in the center of $M$, which
 satisfies that the quotient $P\setminus PwB$ is unique open double coset in
$P\setminus G$. As it is more usual, in this case we speak of $\chi$--generic representations and $\chi$--generic Bernstein classes.
In this case, above discussion implies the following standard lemma which proof we leave to a reader as an exercise.

\begin{Lem}\label{lnd-3}
 Assume that $G$ is quasi-split over $k$. The class $\mathfrak M$ is $\chi$--generic if and only
 if for a representative $(M, \rho)$ of $\mathfrak M$ which is taken among the set of standard Levi subgroups
 we have that $\rho$ is $\chi'$--generic.
\end{Lem}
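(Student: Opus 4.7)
The plan is to deduce the lemma from Rodier's isomorphism
\[
r_{U, \chi}\!\left(\Ind_{P}^{G}(\sigma)\right) \simeq r_{U\cap M, \chi'}(\sigma)
\]
recalled just before the statement, combined with Bernstein's structural description of the class $\mathfrak{M} = [M, \rho]$. The key auxiliary observation I will use is that every unramified character $\chi_0$ of $M$ is trivial on $U\cap M$: indeed $U\cap M$ is the unipotent radical of the standard Borel of $M$ and so sits inside $[M,M]$, while by definition $\chi_0$ factors through $M/M^0$, and $M^0 \supset [M,M]$. Consequently, for any smooth representation $\sigma$ of $M$, the underlying space of $r_{U\cap M, \chi'}(\chi_0\sigma)$ is canonically identified with that of $r_{U\cap M, \chi'}(\sigma)$.

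For the forward direction, assume $\mathfrak{M}$ is $\chi$-generic and pick an irreducible $\chi$-generic $\pi \in \mathfrak{M}$. By the definition of Bernstein's decomposition, $\pi$ is a subquotient of $\Ind_{P}^{G}(\chi_0\rho)$ for some unramified character $\chi_0$ of $M$. Since $U$ is a union of open compact subgroups, the functor $r_{U,\chi}$ is exact, so $r_{U,\chi}(\pi)\neq 0$ forces $r_{U,\chi}(\Ind_{P}^{G}(\chi_0\rho)) \neq 0$. Rodier's isomorphism then yields $r_{U\cap M, \chi'}(\chi_0\rho)\neq 0$, and by the triviality remark above this equals $r_{U\cap M, \chi'}(\rho)$. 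Hence $\rho$ is $\chi'$-generic.

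For the converse, assume $\rho$ is $\chi'$-generic. Rodier's isomorphism applied to $\sigma = \rho$ gives $r_{U,\chi}(\Ind_{P}^{G}(\rho)) \simeq r_{U\cap M,\chi'}(\rho) \neq 0$, which is moreover one-dimensional by Rodier's uniqueness on $M$. Choose a Jordan--H\"older series of $\Ind_{P}^{G}(\rho)$; by exactness of $r_{U,\chi}$, at least one irreducible subquotient $\pi$ satisfies $r_{U,\chi}(\pi)\neq 0$, i.e.\ is $\chi$-generic by Frobenius reciprocity. Every irreducible subquotient of $\Ind_{P}^{G}(\rho)$ lies in the Bernstein class $[M,\rho] = \mathfrak{M}$, so $\pi \in \mathfrak{M}$, exhibiting $\mathfrak{M}$ as $\chi$-generic.

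No step is genuinely hard here; the only subtlety is the bookkeeping with the unramified twist $\chi_0$ in the forward direction, which is handled precisely by the remark that $\chi_0$ is trivial on $U\cap M$. Everything else is a direct application of the exactness of $r_{U,\chi}$ and Rodier's identity.
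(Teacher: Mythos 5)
Your proof is correct and follows exactly the route the paper intends: the paper declares this lemma a consequence of the ``above discussion'' and leaves it as an exercise, and that discussion is precisely Rodier's isomorphism $r_{U,\chi}\bigl(\Ind_P^G(\sigma)\bigr)\simeq r_{U\cap M,\chi'}(\sigma)$ together with exactness of the twisted Jacquet functor, which is what you apply in both directions. Your auxiliary observation that unramified characters of $M$ are trivial on $U\cap M$ correctly disposes of the twist $\chi_0$, and the remaining ingredients (finite length of $\Ind_P^G(\rho)$, all irreducible subquotients lying in $[M,\rho]$) are the standard facts one is expected to supply.
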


\vskip .2in
We end this section by the following local result which we prove using global methods from
the next section.

\begin{Thm}\label{lnd-4}
 Assume that $G$ is quasi-split over $k$. Let $\chi$ be a generic character of $U=U_B$.  Let $\mathfrak M$ be  any
 Bernstein's class such that $\cInd_{U}^{G}(\chi)(\mathfrak M)\neq 0$.
 Then, the class $\mathfrak M$ is $\chi$--generic.
\end{Thm}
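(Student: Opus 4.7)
The plan is to realize a $\chi$-generic irreducible representation inside $\mathfrak M$ by globalizing the local datum $(G,U,\chi,\mathfrak M)$ and extracting the local component at the chosen place of a global cuspidal $(\psi,U)$-generic automorphic representation. To avoid a clash with global notation I will write $F$ for the non-archimedean local field of this section (the $k$ in the statement) and reserve $k$ for the number field of the globalization.

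First, I invoke Lemma \ref{lnd-1} with $\mathfrak N=\mathfrak M$ -- the hypothesis being exactly the theorem's assumption -- to obtain $f\in C_c^\infty(G)(\mathfrak M)$ with ${\mathcal F}_{(\chi,U)}(f)\not\equiv 0$. Right-translating $f$ by a point $g_0$ at which the Fourier coefficient is non-zero, I may assume $\int_U f(u)\overline{\chi(u)}\,du\neq 0$, without leaving $C_c^\infty(G)(\mathfrak M)$. Next I globalize: choose a number field $k$ with a finite place $v_0$ of completion $F$, a quasi-split semisimple $k$-group $\mathbf G$ with $\mathbf G(k_{v_0})\simeq G$, a Borel $\mathbf B=\mathbf T\mathbf U$ defined over $k$ extending the given local datum, and a generic global unitary character $\psi$ of $\mathbf U(k)\setminus\mathbf U(\mathbb A)$ whose $v_0$-component is $\chi$; each exists by standard arguments for quasi-split groups over number fields (weak approximation handles the character). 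Choose $S\supset V_\infty\cup\{v_0\}$ outside of which everything is unramified, set $f_{v_0}:=f$, and at an auxiliary finite place $w\in S-V_\infty-\{v_0\}$ choose $f_w$ satisfying Lemma \ref{fc-1}(b)--(c) at $w$ and in addition concentrated in a Bernstein class restrictive enough to force cuspidality of the resulting global Poincar\'e series, via (\cite{Muic1}, Proposition 5.3) together with Lemma \ref{lnd-1}. At the remaining finite places in $S$, pick any $f_{v'}$ satisfying (b)--(c) of Lemma \ref{fc-1}.

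With this local input, Lemma \ref{fc-1} produces $f_\infty\in C_c^\infty(\mathbf G_\infty)$ so that the global $f=f_\infty\otimes\bigotimes_{v'}f_{v'}$ yields a Poincar\'e series $P(f)\in L^2_{cusp}(\mathbf G(k)\setminus\mathbf G(\mathbb A))$ with ${\mathcal F}_{(\psi,\mathbf U)}(P(f))(1)\neq 0$. Using the integration formula (\ref{fc-0012}), $P(f)$ is not orthogonal to all pseudo-Eisenstein series $E(\eta)$ of the $\psi$-type from (\ref{fc-0011}), so its projection onto the orthogonal complement of $L^2_{cusp,\,(\psi,\mathbf U)\text{-degenerate}}$ is non-zero. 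By Theorem \ref{thm-cusp-adel} this projection decomposes as a sum of irreducible cuspidal summands, each necessarily $(\psi,\mathbf U)$-generic; choose one such $\mathfrak U$. By the promised Lemma \ref{gl-0}, the $v_0$-component $\pi_{v_0}$ of $\mathfrak U$ is $(\chi,U)$-generic (factorization of Whittaker functionals for quasi-split groups). Finally, because $\mathfrak U$ has non-zero projection from $P(f)$, the action $\mathfrak U(f)$ is non-zero, which forces $\pi_{v_0}(f_{v_0})\neq 0$; but $f_{v_0}\in C_c^\infty(G)(\mathfrak M)$, so $\pi_{v_0}\in\mathfrak M$ by the Bernstein decomposition. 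This produces the desired $\chi$-generic irreducible representation in $\mathfrak M$.

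The main obstacle will be the design of $f_w$: it must simultaneously satisfy the non-vanishing Fourier-coefficient condition of Lemma \ref{fc-1}(b), the right $L_w$-invariance of (c), and a Bernstein-class restriction stringent enough (e.g.\ supported in a single supercuspidal class) to kill the constant terms of $P(f)$ along all proper $k$-parabolics. That such $f_w$ exists reduces to the existence of a $(\chi_w,\mathbf U(k_w))$-generic supercuspidal representation at some chosen auxiliary $w$ -- a consequence of Whittaker-model theory for supercuspidals combined with the usual existence of supercuspidals -- and to the refined construction in (\cite{Muic1}, Proposition 5.3). The remainder of the argument (globalization of $\mathbf G$ and of $\chi$) is standard.
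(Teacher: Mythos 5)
Your strategy—globalize, build a cuspidal Poincar\'e series with controlled Bernstein components, extract a $(\psi,\mathbf U)$-generic cuspidal summand, and read off the local component at $v_0$—is the same idea the paper uses; in effect you re-derive the content of Corollary \ref{gl-8} directly from Lemma \ref{gl-1}. But two steps you treat as routine are exactly where the paper has to work, and as written they do not go through.

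First, the claim that ``weak approximation handles the character'' so that you may globalize the given $\chi$ \emph{exactly} (i.e.\ find a global non-degenerate $\psi$ with $\psi_{v_0}=\chi$) is false. A global non-degenerate character of $\mathbf U(k)\setminus \mathbf U(\mathbb A)$ is determined by a $k$-rational datum (a rational linear form on $\mathbf U/[\mathbf U,\mathbf U]$ against a fixed additive character), so its local component at $v_0$ ranges only over a countable set of characters of $U(k_{v_0})$; the given $\chi$ is arbitrary. Already for $SL_2$ this fails: generic characters of $k_{v_0}$ are parametrized by $k_{v_0}^\times$, and only those coming from $k^\times$ arise as restrictions of global characters. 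Weak approximation gives density, not exactness, and the Bernstein/genericity conditions are not continuous in a way that lets you pass to a limit. The paper avoids this by the preliminary reduction: fixing a convenient $\chi_0$, using the element $a\in A(\overline{k})$ (following \cite{Sh}, \S 3) that intertwines $\chi$ and $\chi_0$, and noting that the hypotheses and conclusion of the theorem transform coherently under conjugation by $a$. Without some version of that reduction, your globalization step has a gap.

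Second, the existence of a $(\chi_w,\mathbf U(k_w))$-generic supercuspidal representation at the auxiliary place $w$ is not ``a consequence of the usual existence of supercuspidals'': not every supercuspidal of a quasi-split group is generic, and the paper does not assume such representations exist in general. Instead it invokes Lemma \ref{lnd-5} (Chebotarev) to choose $w$ where $\mathbf G$ is actually split over $k_w$, and then cites Proposition \ref{allen-1}, which explicitly constructs generic supercuspidals for split simple groups via Moy--Prasad filtrations. Your argument needs an analogous precise input. Modulo these two points the route is the paper's; with them filled in you would essentially recover the proof via Corollary \ref{gl-8}, Lemma \ref{lnd-5}, and Proposition \ref{allen-1}.
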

\begin{proof} Let us make some preliminary reductions to the proof. Let us fix a generic character $\chi_0$ of $U=U_B$.
  Let $\overline{k}$ be the algebraic closure of $k$. Then, as indicated in (\cite{Sh}, Section 3), for each generic
  character $\chi$ of $U$ there exists
an element $a\in A(\overline{k})$, where $A$ is a maximal split $k$--torus in $T$ such that the following holds:
\begin{itemize}

\item  the map $ g\mapsto a^{-1}ga$ is  a continuous automorphism of $G=G(k)$,

\item  $a^{-1}Ua=U$,

\item $\chi(u)=\chi_0(a^{-1}ua)$, for all $u\in U$,

\item it fixes the set of standard parabolic subgroups of $G$  and their standard Levi subgroups
  (with respect to the choice of $B$ and $A$)

\item it permutes the set of supercuspidal representations and the set of unramified characters of each standard
  Levi subgroup $M$:
$\rho^a(m)=\rho(a^{-1}ma)$, and $\chi^a(m)=\chi(a^{-1}ma)$, $m\in M$

\item the map $\pi\longmapsto \pi^a$ permutes irreducible representations

\item if $\pi$ is $\chi_0$--generic, then  $\pi^a$ is $\chi$--generic

 \item if $\pi$ is a subquotient of $\Ind_P^G(\chi\rho)$, then $\pi^a$  is a subquotient of $\Ind_P^G(\chi^a\rho^a)$.
\end{itemize}

These facts show that it is enough to establish the theorem for some convenient character $\chi$. We  complete the proof
using Corollary \ref{gl-8}, Lemma \ref{lnd-5}, and the fact that when $G$ is split then there exist generic supercuspidal
representations of $G$ (see Proposition \ref{allen-1} for the case of simple groups).
\end{proof}

\vskip .2in

\begin{Lem}\label{lnd-5} Let $H$ be a reductive group defined  over a number field $K$. Then there exists infinitely many places $v$ of $K$ such that
$H$ is split over $K_v$.
\end{Lem}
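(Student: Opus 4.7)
The plan is to combine the existence of a splitting field for $H$ with the Chebotarev density theorem. Any reductive group $H$ over a number field $K$ becomes split after base change to some finite Galois extension $L/K$. Indeed, one can choose a maximal $K$-torus $T$ of $H$ (which exists since $K$ is infinite), and $T$ becomes split over the finite Galois extension $L$ obtained by adjoining to $K$ the values of all characters of $T$; since $H$ is determined by its root datum relative to $T$, the group $H\times_K L$ is split over $L$. We may enlarge $L$ if necessary to be Galois over $K$.

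Once $L/K$ is fixed, apply the Chebotarev density theorem to obtain infinitely many (in fact, a positive Dirichlet density of) finite places $v$ of $K$ which split completely in $L$. For such a $v$, pick any place $w$ of $L$ above $v$; the decomposition group is trivial, so the inclusion $K\hookrightarrow L$ induces an isomorphism $K_v \stackrel{\sim}{\to} L_w$. In particular, $L$ embeds into $K_v$ over $K$. Base changing the split $L$-group $H\times_K L$ along this embedding shows that $H\times_K K_v$ is split, as desired.

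The only step that requires care is verifying the first claim, namely that $H$ actually splits (rather than merely becomes quasi-split) over a finite extension. This follows from the standard structure theory: $H$ contains a maximal torus $T$ defined over $K$, and the character module $X^*(T_{\overline K})$ carries a continuous action of $\mathrm{Gal}(\overline K/K)$ factoring through a finite quotient (since $X^*(T_{\overline K})$ is a finitely generated abelian group and $\mathrm{Aut}$ of such a group is virtually finite), say through $\mathrm{Gal}(L/K)$. The torus $T$ then splits over $L$, and a reductive group containing a split maximal torus is split, so $H\times_K L$ is split.

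The main (mild) obstacle is this last structural step; everything else is a direct application of Chebotarev. Once this is in place, the passage from ``split completely in $L$'' to ``$H$ splits over $K_v$'' is formal, and the infinitude of such places is immediate from the positive-density conclusion of Chebotarev (so the result in fact gives much more than infinitely many places).
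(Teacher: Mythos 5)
Your proof is correct and takes essentially the same route as the paper's: find a finite Galois extension $L/K$ over which $H$ (equivalently, a maximal torus of $H$) splits, invoke Chebotarev to get a positive-density set of places of $K$ splitting completely in $L$, and note that for such $v$ one has $K_v \cong L_w$, so $H$ is split over $K_v$. You merely spell out the structural step (that a reductive group with a split maximal torus is split, and that the Galois action on the character lattice factors through a finite quotient) that the paper takes for granted.
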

\begin{proof} There exists a finite Galois extension $K\subset L$ such that $H$ splits over $L$ i.e., $H$ has a maximal torus defined over $L$ and split
 over $L$. On the other hand, by Chebotarev density theorem, there exists a set of finite primes $v$ of $K$ of positive density which are split in the sense of algebraic
 number theory with respect to extension $K\subset L$. For such $v$ and a finite place $w|v$ of $L$, we have $K_v=L_w$.
 Since $H$ is obviously split over $L_w$ (being split over $L$),  $H$ is
 split over $K_v$.
 \end{proof}

\section{Main Global Theorems}\label{gl}

In this section we return to the global settings of Section \ref{fc}. Let $K=K_\infty\times \prod_{v\in V_f} K_v$ be a maximal compact
subgroup of $G(\bbA)$, where $K_v=G(\calO_v)$ for almost all $v$. By Theorem \ref{thm-cusp-adel}, $L^2_{cusp}(G(k)  \setminus G(\bbA))$
can be decomposed into a Hilbert direct sum of irreducible unitary representations of G(A) each occurring with a finite multiplicity.
Then, the same is true for any closed subrepresentation of $L^2_{cusp}(G(k)  \setminus G(\bbA))$.

Let $\mathfrak U$ be an irreducible subrepresentation of $L^2_{cusp}(G(k)  \setminus G(\bbA))$. On the space $\mathfrak U_K$ of $K$--finite vectors we have
an irreducible representation $\pi$ of $(\mathfrak g_\infty, K_\infty)\times G(\mathbb A_f)$, where  $\mathfrak g_\infty$ is a real Lie algebra of $G_\infty$.
In fact, $\pi$ is an irreducible subspace of the space of all cuspidal automorphic forms $\cal A_{cusp}(G(k)\setminus G(\mathbb A))$ and it is dense in $\mathfrak U$
(see \cite{BJ}). The representation $\pi$ is a restricted tensor product of local representations: $\pi\simeq \pi_\infty \otimes_{v\in V_f} \pi_v$, where for almost all $v\in V_f$ 
the representation $\pi_v$ is unramified.

Let $U$ be a unipotent $k$-subgroup of $G$. Let $\psi: U(k)\setminus U(\mathbb A)\longrightarrow \mathbb C^\times$
be a (unitary) character.  We define a  closed subrepresentation (see Section \ref{fc})
$$
L^2_{cusp, \ \text{$(\psi, U)$--degenerate}}(G(k)\setminus G(\bbA))=
L^2_{cusp}(G(k)  \setminus G(\bbA))\cap L^2_{\text{$(\psi, U)$--degenerate}}(G(k)\setminus G(\bbA))
$$
Let $\mathfrak U$ be an irreducible closed subrepresentation of $L^2_{cusp}(G(k)  \setminus G(\bbA))$ such that
$$
\mathfrak U\not\subset L^2_{cusp, \ \text{$(\psi, U)$--degenerate}}(G(k)\setminus G(\bbA)).
$$
Then we say that $\mathfrak U$ is $(\psi, U)$--generic.

We have the following standard result:

\begin{Lem}\label{gl-0}
Let $\mathfrak U$ be an irreducible subrepresentation of $L^2_{cusp}(G(k)  \setminus G(\bbA))$
which is $(\psi, U)$--generic. Then, for every $v\in V_f$, the representation $\pi_v$ is  $(\psi_v, U(k_v))$--generic.
\end{Lem}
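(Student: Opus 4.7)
The idea is to extract a nonzero local $(\psi_v, U(k_v))$--Whittaker functional at a chosen finite place $v_0$ out of the global Fourier coefficient, by evaluating the global Whittaker function along a suitable slice of $G(\bbA)$.

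First I would establish that every nonzero $K$--finite vector of $\mathfrak U$ is $(\psi,U)$--generic. The space $L^2_{cusp, \text{$(\psi,U)$--degenerate}}(G(k)\setminus G(\bbA))$ is closed and $G(\bbA)$--invariant (as recalled in Section \ref{fc}, via orthogonality to the series $E(\eta)$ of (\ref{fc-0011})). Hence $\mathfrak U \cap L^2_{cusp, \text{$(\psi,U)$--degenerate}}(G(k)\setminus G(\bbA))$ is a closed $G(\bbA)$--invariant subspace of $\mathfrak U$; since $\mathfrak U$ is irreducible and by hypothesis not contained in the degenerate subspace, this intersection is $\{0\}$. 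In particular any nonzero $\varphi \in \mathfrak U_K$, being a smooth automorphic form, yields a smooth Whittaker function $W_\varphi:={\mathcal F}_{(\psi,U)}(\varphi)$ that is nonzero somewhere on $G(\bbA)$.

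Next, fix $v_0 \in V_f$. Using the restricted tensor product $\mathfrak U_K \simeq \pi_{v_0} \otimes \pi^{v_0}$, where $\pi^{v_0}:=\pi_\infty \otimes \bigotimes_{v\in V_f,\ v\ne v_0}' \pi_v$, and the decomposition $G(\bbA)=G(k_{v_0}) \times G(\bbA^{v_0})$, pick $\varphi\in \mathfrak U_K$ with $W_\varphi(g_0)\ne 0$ at some $g_0=(g_{0,v_0}, g_0^{v_0})$, and expand $\varphi=\sum_{i=1}^n \eta_i \otimes \xi_i$ with $\eta_i\in \pi_{v_0}$, $\xi_i\in \pi^{v_0}$. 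For each $\xi\in \pi^{v_0}$ define
$$
T_\xi : \pi_{v_0}\longrightarrow \Ind_{U(k_{v_0})}^{G(k_{v_0})}(\psi_{v_0}), \qquad T_\xi(\eta)(h)=W_{\eta\otimes \xi}(h,\, g_0^{v_0}).
$$
The transformation law (\ref{fc-00}), applied to elements $(u_{v_0},1)\in U(\bbA)$, shows $T_\xi(\eta)(u h)=\psi_{v_0}(u)T_\xi(\eta)(h)$ for $u\in U(k_{v_0})$; smoothness of $\eta$ on $G(k_{v_0})$ is inherited by $T_\xi(\eta)$, so the image indeed lies in the smooth induced representation. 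Since the global right $G(k_{v_0})$--action on $\eta\otimes \xi$ becomes right translation in the first variable of $W_{\eta\otimes \xi}$, the map $T_\xi$ is $G(k_{v_0})$--equivariant.

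Finally, the relation $\sum_i T_{\xi_i}(\eta_i)(g_{0,v_0}) = W_\varphi(g_0)\ne 0$ forces some $T_{\xi_i}$ to be nonzero, giving a nonzero element of $\Hom_{G(k_{v_0})}(\pi_{v_0},\, \Ind_{U(k_{v_0})}^{G(k_{v_0})}(\psi_{v_0}))$, i.e., $\pi_{v_0}$ is $(\psi_{v_0}, U(k_{v_0}))$--generic. The delicate point to verify carefully is the smoothness/equivariance bookkeeping that legitimizes slicing the global Whittaker function at the place $v_0$; once this is set up, the finite tensor expansion of $\varphi$ reduces the argument to picking out a single nonzero local summand.
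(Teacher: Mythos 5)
Your proof is correct, and it takes a genuinely different route from the paper's on the one nontrivial point.

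Both proofs ultimately produce a nonzero $G(k_{v_0})$--equivariant map $\pi_{v_0}\to \Ind_{U(k_{v_0})}^{G(k_{v_0})}(\psi_{v_0})$ from the global Fourier coefficient, and both implicitly use that $\mathfrak U_K$, viewed as a smooth $G(k_{v_0})$--module via the restricted tensor factorization, is a multiple of $\pi_{v_0}$. The difference is in \emph{where} one evaluates the Whittaker function. The paper insists on evaluating at $g=1$: it defines $\Lambda(\varphi)={\mathcal F}_{(\psi,U)}(\varphi)(1)$, observes that $\Lambda$ transforms by $\psi_{v_0}$ under right translation by $U(k_{v_0})$, and then spends the bulk of the proof showing $\Lambda\ne 0$, which requires a real--analyticity argument: if the Fourier coefficient vanished at $1$ for all $K$--finite $\varphi$, then the $(\mathfrak g_\infty, K_\infty)$--module structure and the real--analyticity of ${\mathcal F}_{(\psi,U)}(\varphi)$ in the Archimedean variable would force ${\mathcal F}_{(\psi,U)}(\varphi)$ to vanish on all of $G(\bbA)$, and then a density/pairing argument via \eqref{fc-0012} would make $\mathfrak U$ degenerate. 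You sidestep this entirely: since $L^2_{cusp,\,(\psi,U)\text{--degenerate}}$ is closed and invariant and $\mathfrak U$ is irreducible, the intersection is zero, so any nonzero $K$--finite $\varphi$ has $W_\varphi\not\equiv 0$ a.e.; continuity of $W_\varphi$ then hands you \emph{some} point $g_0$ where it is nonzero, and you evaluate there. That avoids the analyticity input at the price of the explicit tensor-slicing bookkeeping (expanding $\varphi=\sum\eta_i\otimes\xi_i$ and building $T_\xi$), which the paper compresses into the one-line remark that $\mathfrak U_K$ is a direct sum of copies of $\pi_v$. Your version is arguably more elementary; the paper's version has the minor bonus of producing the nonvanishing at the identity, which it reuses elsewhere (e.g.\ in the discussion surrounding (I-iii) in Section \ref{fc}).

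One small point worth making explicit in your write-up: the assertion that a simple tensor $\eta\otimes\xi$ lies in $\mathfrak U_K$ (so that $W_{\eta\otimes\xi}$ makes sense) uses that a smooth vector of $\pi_{v_0}$ is automatically $K_{v_0}$--finite; this is standard but deserves a word, since the tensor expansion of $\varphi$ is the hinge of your argument.
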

\begin{proof} Let $\Lambda: \ \mathfrak U_K\longrightarrow \mathbb C$ be a linear functional defined by
$$
\varphi\longmapsto {\mathcal F}_{(\psi, U)}(\varphi)(1)=\int_{U(k)\setminus U(\mathbb A)}\varphi(u)\overline{\psi(u)} du.
$$
We show that $\Lambda$ is non--zero. Assuming this for a moment, we complete the proof of the lemma. Let us fix a finite place $v$. 
Then, for $u\in U(k_v)$ and  $\varphi\in \mathfrak U_K$, we have the following:
$$
\Lambda\left(\pi(u_v)\varphi\right)=  \int_{U(k)\setminus U(\mathbb A)}\pi(u_v)\varphi(u)\overline{\psi(u)} du=
\int_{U(k)\setminus U(\mathbb A)}\varphi(uu_v)\overline{\psi(u)} du=\psi_v(u_v)\Lambda\left(\varphi\right).
$$
This means that $\mathfrak U_K$ is  $(\psi_v, U(k_v))$--generic considered as a smooth $G(k_v)$--representation.
But this representation is a direct sum of possibly infinitely many copies of $\pi_v$. 
This means that $\pi_v$ is  $(\psi_v, U(k_v))$--generic.

It remains to show that $\Lambda\neq 0$. If not, we have 
$${\mathcal F}_{(\psi, U)}(\varphi)(1)=0
$$
for all $\varphi\in \mathfrak U_K$. Since $\mathfrak U_K$ is $(\mathfrak g_\infty, K_\infty)\times G(\mathbb A_f)$--invariant,
writing
$$
G(\mathbb A)=G_\infty\times G(\mathbb A_f),
$$
we conclude that 
$$
{\mathcal F}_{(\psi, U)}(\varphi)(k_\infty \exp{(X)}, g_f)=\sum_{n=0}^\infty \frac{1}{n!}X^n.{\mathcal F}_{(\psi, U)}(\varphi)(k_\infty, g_f)=0,
$$ 
for any $g\in G(\mathbb A_f)$, $k_\infty \in K_\infty$, and for $X$ in a neighborhood of $0$ (depending on $k_\infty$)
in $\mathfrak g_\infty$. This means that there exists an open set $V\subset G_\infty$ which meets all connected components (in usual metric topology) of 
$ G_\infty$ such that 
$$
{\mathcal F}_{(\psi, U)}(\varphi)=0 \ \ \text{on $V\times G(\mathbb A_f)$}.
$$
This implies that 
$$
{\mathcal F}_{(\psi, U)}(\varphi)=0 \ \ \text{on $G(\mathbb A)$}
$$
since ${\mathcal F}_{(\psi, U)}(\varphi)$ is 
real--analytic in the first variable being an integral over a compact set of $\varphi$ which is obviously 
real analytic function in the first variable.

Thus, we conclude that ${\mathcal F}_{(\psi, U)}=0$ on the dense subset $\mathfrak U_K$ of $\mathfrak U$. Let now $\varphi\in \mathfrak U$. Then, 
using the discussion at beginning  of Section \ref{fc} (see 
(\ref{fc-0012})), we conclude that
$\langle \varphi, \eta\rangle=0$ 
for all $\eta$ described there. From this, applying again (\ref{fc-0012}), we conclude that 
 ${\mathcal F}_{(\psi, U)}(\varphi)=0$. Since $\varphi\in \mathfrak U$ is arbitrary, we conclude that $\mathfrak U$ is 
not $(\psi, U)$--generic.
\end{proof}

Now, we state and prove the main technical result of the
present section.

\vskip .2in

\begin{Lem}\label{gl-1}
  Assume that $G$ is a semisimple algebraic group defined over a number field $k$. Let $U$ be an unipotent $k$-subgroup of $G$.
  Let $\psi: U(k)\setminus U(\mathbb A)\longrightarrow \mathbb C^\times$
be a (unitary) character.  Let $S$ be a finite set of places, containing $V_\infty$, large enough such that $G$
 and $\psi$ are  unramified for $v\not\in S$ (in particular, $\psi_v$ is trivial on $U(\mathcal O_v)$).
 For each finite place $v\in S$,  let $\mathfrak M_v$ be  a  Bernstein's class such that
 $\cInd_{U(k_v)}^{G(k_v)}(\psi_v)(\mathfrak M_v)\neq 0$. Assume  the following property:
if $P$ is a $k$--parabolic subgroup of $G$ such that
a Levi subgroup of $P(k_v)$ contains a conjugate of a Levi subgroup defining  $\mathfrak M_v$ for all finite $v$ in
$S$,  then $P=G$. 
Then, there exists an irreducible subspace in
$L^2_{cusp}(G(k)\setminus G(\bbA))$ which is $(\psi, U)$--generic such that its
$K$--finite vectors  $\pi_\infty\otimes_{v\in V_f} \pi_v$ satisfy the following:
\begin{itemize}
\item[(i)] $\pi_v$ is unramified for $v\not\in S$.
\item[(ii)] $\pi_v$ belongs to the class $\mathfrak M_v$ for all finite $v\in S$.
\item[(iii)] $\pi_v$ is $(\psi_v, U(k_v))$--generic for all finite $v$.
\end{itemize}
In particular, for each finite $v\in S$, the class $\mathfrak M_v$ is $(\psi_v, U(k_v))$--generic.
 \end{Lem}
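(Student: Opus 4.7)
The plan is to build a Poincar\'e series $P(f)$ from local data prescribed by the Bernstein classes $\mathfrak M_v$, then extract from its spectral decomposition an irreducible $(\psi,U)$-generic cuspidal component with the required local properties. The three ingredients are Lemma \ref{lnd-1} (local Fourier-nonvanishing inside a prescribed Bernstein component), Lemma \ref{fc-1} (producing a global $P(f)$ with nonzero $(\psi,U)$-Fourier coefficient), and (\cite{Muic1}, Proposition 5.3) (forcing $P(f)$ to be cuspidal).

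First, for each finite $v\in S$, Lemma \ref{lnd-1} applied with $\mathfrak N=\mathfrak M_v$ produces $f_v\in C_c^\infty(G(k_v))(\mathfrak M_v)$ whose $(\psi_v,U(k_v))$-Fourier coefficient is not identically zero on $G(k_v)$. Since right translation preserves Bernstein components, I may translate $f_v$ on the right so that the Fourier coefficient at $1\in G(k_v)$ is nonzero, i.e.\ $\int_{U(k_v)}f_v(u)\overline{\psi_v(u)}\,du\neq 0$. For $v\notin S$ set $f_v=1_{G(\calO_v)}$ and $L_v=G(\calO_v)$; for $v\in S-V_\infty$ choose $L_v$ to be any open compact subgroup under which the smooth function $f_v$ is right-invariant. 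Hypotheses (a)--(c) of Lemma \ref{fc-1} are then satisfied, so Lemma \ref{fc-1} yields $f_\infty\in C_c^\infty(G_\infty)$ for which $f=f_\infty\otimes_{v\in V_f}f_v$ produces a nonzero $P(f)\in L^2(G(k)\setminus G(\mathbb A))^L$ with $\mathcal F_{(\psi,U)}(P(f))(1)\neq 0$.

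Next, I upgrade $P(f)$ to a cuspidal function. By construction, the $v$-component of the right $G(\mathbb A_f)$-module generated by $P(f)$ lies in $C_c^\infty(G(k_v))(\mathfrak M_v)$ for each finite $v\in S$. Invoking (\cite{Muic1}, Proposition 5.3), the hypothesis on $k$-parabolics forces $P(f)\in L^2_{cusp}(G(k)\setminus G(\mathbb A))$: indeed, if some constant term $(P(f))_P$ along a proper $k$-parabolic $P=MU_P$ were nonzero, then at each $v\in S-V_\infty$ an irreducible subquotient of the representation generated by $P(f)$ would, via the theory of Jacquet modules, appear as a subquotient of $\Ind_{P(k_v)}^{G(k_v)}$ of some representation on $M(k_v)$, and Bernstein's decomposition would then force a Levi of $\mathfrak M_v$ to be conjugate into a Levi of $P(k_v)$ for every such $v$, contradicting the hypothesis and forcing $P=G$.

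Finally, I decompose $P(f)$ using Theorem \ref{thm-cusp-adel} into a Hilbert sum of irreducible cuspidal subrepresentations. Since $\mathcal F_{(\psi,U)}(P(f))\neq 0$, at least one summand $\mathfrak U$ is $(\psi,U)$-generic. Writing $\mathfrak U_K\simeq \pi_\infty\otimes_{v\in V_f}\pi_v$, the existence of a $G(\calO_v)$-fixed vector for $v\notin S$ coming from $1_{G(\calO_v)}$ gives (i); the fact that $\pi_v$ is a subquotient of the right $G(k_v)$-module generated by $f_v\in C_c^\infty(G(k_v))(\mathfrak M_v)$ places it in $\mathfrak M_v$ for finite $v\in S$, giving (ii); and Lemma \ref{gl-0} applied to the $(\psi,U)$-generic $\mathfrak U$ forces every $\pi_v$ to be $(\psi_v,U(k_v))$-generic, giving (iii). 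The final ``in particular'' clause is immediate from (ii) and (iii) combined with Definition \ref{lnd-0}. The main obstacle in this program is the cuspidality step: the compatibility between the Bernstein component $\mathfrak M_v$ carried by $f_v$, the hypothesis on $k$-parabolics, and the Jacquet-module analysis underlying (\cite{Muic1}, Proposition 5.3) must be handled carefully.
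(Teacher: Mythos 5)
Your proposal is correct and follows essentially the same route as the paper: choose $f_v$ in the prescribed Bernstein component via Lemma \ref{lnd-1} (right-translating so the Fourier coefficient at $1$ is nonzero), invoke Lemma \ref{fc-1} and (\cite{Muic1}, Proposition 5.3) to get a nonzero $(\psi,U)$-generic cuspidal $P(f)$, decompose the closed module it generates, and pick a $(\psi,U)$-generic irreducible summand. The one place the paper is more explicit is the extraction of (i) and (ii): it uses the nonzero orthogonal projection $\psi_i$ of $P(f)$ onto the chosen summand, approximates by a cusp form, computes $\int_{G(\mathbb A)} f(g)\overline{\psi_i(g)}\,dg>0$ to conclude $\widetilde{\pi}_v^i(f_v)\neq 0$ for every $v$, and then cites (\cite{Muic1}, Lemma 5.2 (ii)) to place $\pi_v^i$ in $\mathfrak M_v$ — your shortcut "$\pi_v$ is a subquotient of the right $G(k_v)$-module generated by $f_v$" reaches the same conclusion but should itself be justified via the nonvanishing of this projection.
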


 \vskip .2in
 Before we start the proof we make some preliminary remarks. If $U=\{1\}$ and $\chi=1$, then Lemma \ref{gl-1} is
 just
 (\cite{Muic1}, Theorem 1.1). On the other hand, assuming that $\chi$ is trivial and
 $U$ is a  unipotent radical of a proper $k$--parabolic subgroup
 $Q$ of $G$, 
 our assumptions on $\mathfrak M_v$ (for finite $v\in S$)
 means that there exists a non--zero function $f_v\in C_c^\infty(G(k_v))(\mathfrak M_v)$ such that
 $$
 \int_{U(k_v)}f_v(u_vg_v) du_v\neq 0
 $$
 for some $g_v$.  Then, (\cite{Muic1}, Lemma 5.1) implies that a conjugate of a Levi subgroup defining
 $\mathfrak M_v$ is contained in a
 Levi subgroup of $Q(k_v)$. Since this holds for all $v\in S$, we would get $Q=G$ which is not possible.
 So, in this case, as it should be, the theorem does not give anything.

 \vskip .2in

\begin{proof}[Proof of Lemma \ref{gl-1}] As in Lemma \ref{fc-1}, we let $f_v=1_{G(\calO_v)}$ for all $v\not\in S$.
For finite $v\in S$, applying  Lemma \ref{lnd-1}, we select $f\in C_c^\infty(G(k_v))(\mathfrak M_v)$ such that
$$
\int_{U(k_v)} f_v(u_v)\overline{\psi_v(u_v)} du_v\neq 0.
$$
We  select open compact subgroups $L_v$ ($v\in V_f$)
as required  in Lemma \ref{fc-1}. Then, by Lemma \ref{fc-1}, there exists $f_\infty \in
C_c^\infty(G_\infty)$ such that letting  $f=f_\infty\otimes_{v\in V_f} f_v$  we have

$$
{\mathcal F}_{(\psi, U)}(P(f))\neq 0.
$$
Thus, $P(f)$ is a non--zero element of $L^2(G(k)\setminus G(\mathbb A))$. To show its cuspidality we use our assumption:
if $P$ is a $k$--parabolic subgroup of $G$ such that
a Levi subgroup of $P(k_v)$ contains a conjugate of a Levi subgroup defining  $\mathfrak M_v$ for all finite $v$ in
$S$,  then $P=G$, and  apply (\cite{Muic1}, Proposition 5.3). Thus, we obtain

$$
P(f)\in L^2_{cusp}(G(k)  \setminus G(\bbA)).
$$

Let $\mathfrak V$ be a closed subspace of  $L^2_{cusp}(G(k)  \setminus G(\bbA))$ generated by $P(f)$. It can be
decomposed into a direct sum of  irreducible unitary representations of  $G(\bbA)$ each
occurring with a finite multiplicity:
$$
\mathfrak V=\hat{\oplus}_j \mathfrak U_j, \ \ \text{each $\mathfrak U_j$ is closed and irreducible.}
$$
Let us write according to this decomposition
\begin{equation}\label{gl-2}
P(f)=\sum_j\psi_j, \ \ \psi_j\in \mathfrak U_j.
\end{equation}
Since $P(f)$ generates $\mathfrak V$, we must have
$$
\psi_j\neq 0, \ \ \text{for all $j$.}
$$
Also, since
$$
P(f)\not\in L^2_{cusp, \ \text{$(\psi, U)$--degenerate}}(G(k)\setminus G(\bbA)),
$$
there exists an index $i$ such that we have
$$
\mathfrak U_i \not\subset L^2_{cusp, \ \text{$(\psi, U)$--degenerate}}(G(k)\setminus G(\bbA)).
$$

From now on, we use arguments similar to those used in the proof of (\cite{Muic1}, Theorem 7.2). We just outline the argument.
It follows from (\ref{gl-2}) that the following inner product is not zero:
\begin{equation}\label{gl-4000}
\int_{G(k)\setminus G(\mathbb A)} P(f)(g)\overline{\psi_i(g)}dg= \int_{G(k)\setminus G(\mathbb A)} |\psi_i(g)|^2 dg>0.
\end{equation}
Since the space of cusp forms is dense in $\mathfrak U_i$ we can assume that $\psi_i$ is a
cusp form in above inequality. In particular, this means that
\begin{equation}\label{gl-3}
\psi_i\in C^\infty(G(k)\setminus G(\mathbb A)).
\end{equation}
The integral on the left--hand side in (\ref{gl-4000}) can be written as follows:
\begin{equation}\label{gl-4}
\int_{G(\mathbb A)} f(g)\overline{\psi_i(g)}dg=\int_{G(k)\setminus G(\mathbb A)} P(f)(g)\overline{\psi_i(g)}dg>0.
\end{equation}

Next, as it is well--known in the unitary theory, the space
$\overline{\mathfrak U}_i$ consisting of all $\overline{\psi}$, $\psi \in \mathfrak U_j$, is a contragredient representation of
$\mathfrak U_i$. Next, (\ref{gl-3}) and (\ref{gl-4}) tell us that $f$ acts non--trivially on $\overline{\mathfrak U}_i$.
If we write
$$
(\mathfrak U_i)_K=\pi_\infty^i\otimes_{v\in V_f} \pi_v^i,
$$
then
$$
(\overline{\mathfrak U}_i)_K=\wit{\pi}_\infty^i\otimes_{v\in V_f} \wit{\pi}_v^i,
$$
and
$$
\wit{\pi}_\infty^i(f_\infty)\otimes_{v\in V_f} \wit{\pi}_v^i(f_v)\neq 0.
$$
In particular, for each finite place $v$, we have
\begin{equation}\label{gl-5}
\wit{\pi}_v^i(f_v)\neq 0.
\end{equation}
Since, $f_v=1_{G(\calO_v)}$, for all $v\not\in S$, (\ref{gl-5}) implies that $\wit{\pi}_v^i$ and hence $\pi_v^i$ are unramified.
Also, since for finite $v\in S$, $f\in C_c^\infty(G(k_v))(\mathfrak M_v)$, (\ref{gl-5}) and (\cite{Muic1}, Lemma 5.2 (ii)) imply that
$\wit{\pi}_v^i$ belongs to the class $\wit{\mathfrak M}_v$. Hence, $\pi_v^i$ belongs to the class $\mathfrak M_v$.
Thus, if we let $\mathfrak U=\mathfrak U_i$, then  (i) and (ii) hold. Finally, (iii) holds by Lemma \ref{gl-0}.
\end{proof}

\vskip .2in
The following result we need in the proof of Theorem \ref{lnd-4}.

\begin{Cor}\label{gl-8}
Assume that $G$ is a semisimple  quasisplit algebraic group defined over a number field $k$. Let $U$ be the unipotent radical
of a Borel subgroup defined over $k$. Let $\psi: U(k)\setminus U(\mathbb A)\longrightarrow \mathbb C^\times$
be a nondegenerate  character. Assume that $v_0$ is a finite place of $k$ such that $G$ is unramified over $k_{v_0}$ and such that
there exists a $\psi_{v_0}$--generic supercuspidal representation of  $G(k_{v_0})$. Then, for any other finite place $v$,
any  Bernstein's class which satisfies  $\cInd_{U(k_v)}^{G(k_v)}(\psi_v)(\mathfrak M_v)\neq 0$ is $\psi_v$--generic.
\end{Cor}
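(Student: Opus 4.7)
The proof should be a direct application of Lemma \ref{gl-1}, where the hypothesis on $v_0$ is exactly what allows us to verify the delicate parabolic condition of that lemma essentially for free.

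The plan is as follows. Fix a finite place $v$ and a Bernstein class $\mathfrak{M}_v$ with $\cInd_{U(k_v)}^{G(k_v)}(\psi_v)(\mathfrak{M}_v)\neq 0$; we want to show it is $\psi_v$-generic. Enlarge $S$ so that it contains $V_\infty$, the place $v_0$, the place $v$, and every finite place where $G$ or $\psi$ is ramified. At $v_0$, let $\rho_0$ be a $\psi_{v_0}$-generic supercuspidal representation of $G(k_{v_0})$ (which exists by assumption), and set $\mathfrak{M}_{v_0}=[G(k_{v_0}),\rho_0]$. By Lemma \ref{lnd-0a}(iv), $\cInd_{U(k_{v_0})}^{G(k_{v_0})}(\psi_{v_0})(\mathfrak{M}_{v_0})\neq 0$, so the hypothesis of Lemma \ref{gl-1} holds at $v_0$. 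At $v$, take our given class $\mathfrak{M}_v$. At the remaining finite places $v'\in S$, pick \emph{any} Bernstein class $\mathfrak{M}_{v'}$ with $\cInd_{U(k_{v'})}^{G(k_{v'})}(\psi_{v'})(\mathfrak{M}_{v'})\neq 0$, which exists by Bernstein theory as noted in Lemma \ref{lnd-1}.

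The next step is to verify the parabolic hypothesis of Lemma \ref{gl-1}: if $P$ is a $k$-parabolic of $G$ such that for every finite $v'\in S$ a Levi of $P(k_{v'})$ contains a conjugate of a Levi defining $\mathfrak{M}_{v'}$, then $P=G$. But the Levi defining $\mathfrak{M}_{v_0}$ is all of $G(k_{v_0})$ because $\rho_0$ is supercuspidal. Hence the condition at $v_0$ alone forces a Levi of $P(k_{v_0})$ to contain a conjugate of $G(k_{v_0})$, i.e.\ $P(k_{v_0})=G(k_{v_0})$, and since $P$ is a $k$-parabolic this forces $P=G$. So the hypothesis is satisfied regardless of what we chose at the other places.

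Now applying Lemma \ref{gl-1}, the conclusion tells us that every $\mathfrak{M}_{v'}$ for finite $v'\in S$ is $(\psi_{v'},U(k_{v'}))$-generic, and in the quasi-split setting with $U$ the unipotent radical of a Borel and $\psi$ nondegenerate, this is precisely the notion of $\psi_{v'}$-generic. In particular $\mathfrak{M}_v$ is $\psi_v$-generic, which is what we wanted.

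The only thing worth flagging is that the corollary is actually needed for the proof of Theorem \ref{lnd-4}, which is where the existence of a generic supercuspidal at one split place is supplied via Lemma \ref{lnd-5} and Proposition \ref{allen-1}; but within the proof of the corollary itself there is no genuine obstacle, since the supercuspidal class at $v_0$ is precisely the mechanism by which the otherwise delicate global parabolic condition in Lemma \ref{gl-1} is trivially met.
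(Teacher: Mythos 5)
Your proposal is correct and follows essentially the same route as the paper: it invokes Lemma \ref{gl-1} with $S$ enlarged to contain $v$, $v_0$, and the ramified places, places the supercuspidal class $[G(k_{v_0}),\rho_0]$ at $v_0$ (justified via Lemma \ref{lnd-0a}(iv)) so that the parabolic hypothesis collapses to $P=G$, and reads off genericity of $\mathfrak M_v$ from the conclusion. The paper's proof is terser and leaves the verification at $v_0$ implicit, but the argument is the same.
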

\begin{proof} This corollary is a direct consequence of Lemma \ref{gl-1}. We just need to select $S$ large enough
  such that it contains both $v$ and $v_0$. For each finite place $w\in S$, $w\neq v, v_0$, let $\mathfrak M_w$ be
  a Bernstein's class such that  $\cInd_{U(k_w)}^{G(k_w)}(\psi_w)(\mathfrak M_w)\neq 0$ (at least one such class exists  by
  Bernstein's theory since  $\cInd_{U(k_w)}^{G(k_w)}(\psi_w)\neq 0$). 
\end{proof}

\vskip .2in
The following theorem is the main result of the present section and the paper: 

\begin{Thm}\label{gl-6}
Assume that $G$ is a semisimple  algebraic group defined over a number field $k$. Let $U$ be a unipotent $k$-subgroup.
Let $\psi: U(k)\setminus U(\mathbb A)\longrightarrow \mathbb C^\times$
be a (unitary) character.  Let $S$ be a finite set of places, containing $V_\infty$, large enough such that $G$
 and $\psi$ are  unramified for $v\not\in S$ (in particular, $\psi_v$ is trivial on $U(\mathcal O_v)$).
For each finite place $v\in S$,  let $\mathfrak M_v$ be  a   $(\psi_v, U(k_v))$--
generic Bernstein's class such that the following holds:
 if $P$ is a $k$--parabolic subgroup of $G$ such that
a Levi subgroup of $P(k_v)$ contains a conjugate of a Levi subgroup defining  $\mathfrak M_v$ for all finite $v$ in
$S$,  then $P=G$. Then, there exists an irreducible subspace in
$L^2_{cusp}(G(k)\setminus G(\bbA))$ which is $(\psi, U)$--generic such that its
$K$--finite vectors  $\pi_\infty\otimes_{v\in V_f} \pi_v$ satisfy the following:
\begin{itemize}
\item[(i)] $\pi_v$ is unramified for $v\not\in S$.
\item[(ii)] $\pi_v$ belongs to the class $\mathfrak M_v$ for all finite $v\in S$.
\item[(iii)] $\pi_v$ is $(\psi_v, U(k_v))$--generic for all finite $v$.
\end{itemize}
\end{Thm}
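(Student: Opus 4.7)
The plan is to deduce Theorem \ref{gl-6} directly from Lemma \ref{gl-1}, as their conclusions coincide verbatim. The only discrepancy is in the local hypothesis at each finite $v\in S$: Lemma \ref{gl-1} requires $\cInd_{U(k_v)}^{G(k_v)}(\psi_v)(\mathfrak{M}_v)\neq 0$, whereas Theorem \ref{gl-6} requires the class $\mathfrak{M}_v$ to be $(\psi_v, U(k_v))$-generic. One implication between these local conditions is already in hand: the ``in particular'' clause of Lemma \ref{gl-1} asserts that its hypothesis forces each $\mathfrak{M}_v$ to be $(\psi_v, U(k_v))$-generic. So the content of the theorem is precisely the converse local implication, which I would prove purely locally at each finite $v \in S$.

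Fix such a $v$, drop subscripts, and suppose $\mathfrak{M}$ is $(\chi, U)$-generic. Applying Lemma \ref{lnd-0a}(iii) with $(\chi, \mathfrak{M})$ replaced by $(\overline\chi, \wit{\mathfrak{M}})$, the existence of a nonzero element --- indeed an irreducible quotient --- of $\cInd_U^G(\chi)(\mathfrak{M})$ is equivalent to $\wit{\mathfrak{M}}$ being $(\overline\chi, U)$-generic. So the reduction is to the following contragredient duality: if $\pi \in \mathfrak{M}$ is irreducible and $(\chi, U)$-generic, then $\wit\pi \in \wit{\mathfrak{M}}$ is $(\overline\chi, U)$-generic.

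For this duality, I would start from a nonzero Whittaker functional $\lambda : \pi \to \mathbb{C}$ satisfying $\lambda \circ \pi(u) = \chi(u)\lambda$, viewed as an element of the algebraic dual $\pi^\vee$ on which $U$ acts via $\overline\chi$. Using admissibility of $\pi$ to filter $\pi^\vee$ by the $L$-invariants of $\wit\pi$ for shrinking open compact $L \subset G(k_v)$, one shows that this eigenvector produces a nonzero class in the coinvariants $r_{U, \overline\chi}(\wit\pi)$ via a Casselman-style pairing. This establishes $(\overline\chi, U)$-genericity of $\wit\pi$ and completes the local reduction. Once it holds at every finite $v\in S$, Theorem \ref{gl-6} is immediate from Lemma \ref{gl-1}.

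The main obstacle is precisely this contragredient duality step. In the ordinary generic case $U = U_B$ with $\chi$ nondegenerate, the argument is essentially folklore: Rodier's multiplicity-one theorem (\cite{ro, ro1}) together with Casselman's pairing between a Jacquet module and the opposite Jacquet module of the contragredient immediately give $r_{U, \overline\chi}(\wit\pi) \simeq r_{U, \chi}(\pi)^\vee$, so $(\chi, U)$-genericity of $\pi$ transfers to $(\overline\chi, U)$-genericity of $\wit\pi$. In the present generality --- arbitrary unipotent $k$-subgroup $U$ and arbitrary unitary character $\chi$ --- the analogous identification requires more care, but it can be carried out using exactness of the coinvariant functor $V \mapsto r_{U, \overline\chi}(V)$ together with the fact that Bernstein decomposition commutes with taking smooth duals of finitely generated admissible representations.
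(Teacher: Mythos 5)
Your plan — deduce the theorem from Lemma \ref{gl-1} by showing that $(\psi_v,U(k_v))$-genericity of $\mathfrak M_v$ already forces $\cInd_{U(k_v)}^{G(k_v)}(\psi_v)(\mathfrak M_v)\neq 0$ — is not the route the paper takes, and it has a genuine gap. You correctly reduce, via Lemma \ref{lnd-0a}(iii), to the purely local assertion that if an irreducible $\pi$ is $(\chi,U)$-generic then $\wit\pi$ is $(\overline\chi,U)$-generic. But nothing in the paper, and nothing in your sketch, establishes this for an arbitrary unipotent $k$-subgroup $U$ and arbitrary unitary character $\chi$. Lemma \ref{lnd-0a}(iii) only converts $(\chi,U)$-genericity of $\pi$ into $\wit\pi$ being a \emph{quotient} of $\cInd_U^G(\overline\chi)$; promoting that to $\wit\pi$ being a \emph{subrepresentation} of $\Ind_U^G(\overline\chi)$, i.e.\ $r_{U,\overline\chi}(\wit\pi)\neq 0$, is precisely the hard part. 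Lemma \ref{lnd-0a}(iv) accomplishes this, but only for supercuspidal $\rho$, and its proof hinges on projectivity of supercuspidals, which is unavailable for general $\pi$. Even in the nondegenerate Whittaker setting the statement is a nontrivial theorem (for $\GL{n}$ it comes from Gelfand--Kazhdan; for general quasi-split groups it is considerably deeper); your appeal to ``Casselman-style pairing'' and to exactness plus compatibility of Bernstein decomposition with duals does not replace such a theorem, because there is no general twisted Casselman pairing for a nontrivial character $\chi$ on an arbitrary $U$.

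The paper sidesteps the issue entirely. It reads Lemma \ref{lnd-0a}(iii) in the other direction: $(\psi_v,U(k_v))$-genericity of $\mathfrak M_v$ implies $\cInd_{U(k_v)}^{G(k_v)}(\overline\psi_v)(\wit{\mathfrak M}_v)\neq 0$ (it has an irreducible quotient). It then applies Lemma \ref{gl-1} to the \emph{contragredient data} $(\overline\psi_v,\wit{\mathfrak M}_v)$ --- the parabolic condition is preserved since $\mathfrak M_v$ and $\wit{\mathfrak M}_v$ share the same Levi --- and obtains an irreducible cuspidal subspace which is $(\overline\psi,U)$-generic with local components $\rho_v\in\wit{\mathfrak M}_v$. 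The final step is a global trick you do not have in your argument: the contragredient of this cuspidal subspace is realized by complex conjugation of functions, which manifestly turns a nonzero $\overline\psi$-Fourier coefficient into a nonzero $\psi$-Fourier coefficient; hence the conjugate space is $(\psi,U)$-generic with local components $\pi_v=\wit\rho_v\in\mathfrak M_v$, and assertion (iii) then follows from Lemma \ref{gl-0}. The global contragredient via complex conjugation is exactly the device that substitutes for the missing local duality.
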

\begin{proof} By Lemma \ref{lnd-0a} (iii), for each finite $v\in S$, the class $\wit{\mathfrak M}_v$
  satisfies $\cInd_{U(k_v)}^{G(k_v)}(\overline{\psi}_v)(\wit{\mathfrak M}_v)\neq 0$.
Thus, by Lemma  \ref{gl-1}, there exists an irreducible subspace $\mathfrak U$ in
$L^2_{cusp}(G(k)\setminus G(\bbA))$ which is $(\overline{\psi}, U)$--generic such that its
$K$--finite vectors  $\rho_\infty\otimes_{v\in V_f} \rho_v$ satisfy the following:
\begin{itemize}
\item[(a)] $\rho_v$ is unramified for $v\not\in S$.
\item[(b)] $\rho_v$ belongs to the class $\wit{\mathfrak M}_v$ for all finite $v\in S$.
\item[(c)] $\rho_v$ is $(\overline{\psi_v}, U(k_v))$--generic for all finite $v$.
\end{itemize}
The contragredient representation of $\mathfrak U$ can be realized on the space of all functions $\overline{\varphi}$ where
$\varphi$ ranges over $\mathfrak U$. Then, by conjugating the Fourier coefficient of $\mathfrak U$, we see that the
contragredient is
$(\psi, U)$--generic.  Thus, if we let $\pi_\infty=\wit{\rho}_\infty$ and $\pi_v =\wit{\rho}_v$, for $v\in V_f$, then we get
(i) and (ii) from (a) and (b), respectively. Finally,
(iii) follows from Lemma \ref{gl-0} since contragredient is $(\psi, U)$--generic.
\end{proof}

\vskip .2in
The following corollary of Theorem \ref{gl-6}  is a generalization of similar
results of Henniart, Shahidi, and Vign\' eras (\cite{Hen}, \cite{vig}, \cite{Sh}, Proposition 5.1).
They considered the case of generic cusp forms  having only supercuspidal representations as ramified local components.
Those forms  have non--trivial Fourier coefficients
with respect to $(\psi, U)$ where $B=TU$ is a Borel subgroup defined over $k$ ($T$ is a maximal torus,
$U$ is the unipotent radical,
both defined over $k$) of   $G$ assumed to be quasi--split, and $\psi$ is generic in the sense
that it is not trivial when restricted to any root subgroup $U_\alpha(\mathbb A)$, where $\alpha$ is a simple root
corresponding to the choice of 
$B$. As usual we call such cuspidal forms $\psi$--generic cuspidal forms.

\begin{Cor}\label{gl-7}
Assume that $G$ is a semisimple quasisplit algebraic group defined over a number field $k$. Let $U$ be the unipotent radical
of a Borel subgroup defined over $k$. Let $\psi: U(k)\setminus U(\mathbb A)\longrightarrow \mathbb C^\times$
be a nondegenerate  character.  Let $S$ be a finite set of places, containing $V_\infty$, large enough such that $G$
 and $\psi$ are  unramified for $v\not\in S$ (in particular, $\psi_v$ is trivial on $U(\mathcal O_v)$).
 For each finite place $v\in S$,  let $[M_v, \rho_v]$ be  a Bernstein's class such that $M_v$ is a standard Levi subgroup of
 $G(k_v)$ and
 $\rho_v$ is a $\psi'_v$--generic supercuspidal representation of $M_v$ (see  the paragraph containing
  (\ref{lnd-2}) in Section \ref{lnd} for notation). Assume that the following holds:
if $P$ is a $k$--parabolic subgroup of $G$ such that
a Levi subgroup of $P(k_v)$ contains a conjugate of $M_v$ for all finite $v\in S$,  then $P=G$.
 Then, there exists an irreducible subspace in
$L^2_{cusp}(G(k)\setminus G(\bbA))$ which is $\psi$--generic such that its
$K$--finite vectors  $\pi_\infty\otimes_{v\in V_f} \pi_v$ satisfy the following:
\begin{itemize}
\item[(i)] $\pi_v$ is unramified for $v\not\in S$.
\item[(ii)] $\pi_v$ belongs to the class $[M_v, \rho_v]$ for all finite $v\in S$.
\item[(iii)] $\pi_v$ is $\psi_v$--generic for all finite $v$.
\end{itemize}
\end{Cor}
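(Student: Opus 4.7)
The plan is to deduce the corollary directly from Theorem \ref{gl-6} by translating the data $(M_v,\rho_v)$ into the language of $(\psi_v,U(k_v))$-generic Bernstein classes and by checking the hypotheses of that theorem.

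First, I would verify that for each finite $v\in S$ the class $\mathfrak M_v := [M_v,\rho_v]$ is $(\psi_v,U(k_v))$-generic in the sense of Definition \ref{lnd-0}. Since $M_v$ is a standard Levi subgroup and $\rho_v$ is $\psi_v'$-generic supercuspidal (with $\psi_v'$ defined by (\ref{lnd-2})), Lemma \ref{lnd-3} tells us exactly that $\mathfrak M_v$ is $\psi_v$-generic, which in the ordinary generic case is the notion of $(\psi_v,U(k_v))$-generic we need.

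Next, I would verify the parabolicity hypothesis of Theorem \ref{gl-6}: if $P$ is a $k$-parabolic subgroup of $G$ such that a Levi subgroup of $P(k_v)$ contains a conjugate of the Levi defining $\mathfrak M_v$ for all finite $v\in S$, then $P=G$. But by construction the Levi defining $\mathfrak M_v$ is precisely $M_v$, so this is exactly the assumption we made on the classes $[M_v,\rho_v]$. The unramifiedness of $G$ and $\psi$ outside $S$ is also in the hypotheses, so all assumptions of Theorem \ref{gl-6} are met.

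Applying Theorem \ref{gl-6}, we obtain an irreducible closed subspace $\mathfrak U\subset L^2_{cusp}(G(k)\setminus G(\mathbb A))$ which is $(\psi,U)$-generic, hence $\psi$-generic in the classical sense, whose $K$-finite vectors decompose as $\pi_\infty\otimes_{v\in V_f}\pi_v$ with $\pi_v$ unramified for $v\notin S$, $\pi_v\in\mathfrak M_v=[M_v,\rho_v]$ for finite $v\in S$, and $\pi_v$ $(\psi_v,U(k_v))$-generic for every finite $v$. This is precisely (i)--(iii). The main work is really done in Theorem \ref{gl-6} (which in turn relies on the Poincar\'e series construction of Lemma \ref{fc-1} and the Bernstein-theoretic input of Lemma \ref{lnd-1} and Proposition 5.3 of \cite{Muic1}); here the only step deserving attention is the translation via Rodier's theorem (Lemma \ref{lnd-3}) from classical $\psi_v'$-genericity of the supercuspidal $\rho_v$ to $(\psi_v,U(k_v))$-genericity of the Bernstein class $[M_v,\rho_v]$, and this presents no obstacle since it is exactly what Lemma \ref{lnd-3} provides.
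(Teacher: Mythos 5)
Your proposal is correct and matches the paper's intended argument: the paper states Corollary \ref{gl-7} without a separate proof, presenting it as a direct consequence of Theorem \ref{gl-6}, with the translation from classical $\psi'_v$-genericity of $\rho_v$ to $\psi_v$-genericity of the Bernstein class $[M_v,\rho_v]$ supplied by Lemma \ref{lnd-3} (Rodier), exactly as you describe. Your identification of the Levi defining $\mathfrak M_v$ with $M_v$ and the resulting verification of the parabolicity hypothesis is also precisely what is needed.
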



\section{Genericness of the representations of \cite{MoyMuic} }\label{sec-6}

\smallskip

Suppose $k_{v}$ is a p-adic field with ring of integers ${\mathcal R}_v$.  Let $G$ be a split simple algebraic group defined over ${\mathcal R}_v$.  
As in ({\cite{MoyMuic}, \S 3.2}), set 
$$
{\mathscr G} \ := \ G(k_{v}) \, , {\text{\rm{\ and \ }}}
{\mathcal K} := G({\mathcal R}_v) \ {\text{\rm{a maximal compact subgroup of ${\mathscr G}$.}}}
$$
\noindent If $L \subset G$ is a subgroup defined over ${\mathcal R}_{v}$, let $L_{v} = L(k_{v})$ be the group of $k_{v}$-rational points.   Let $B$ be a Borel subgroup defined 
over ${\mathcal R}_v$. 

\smallskip

Let $\ScptB ({\mathscr G})$ be the Bruhat-Tits building of ${\mathscr G}$.  Let $x_{\mathcal K} \in \ScptB ({\mathscr G})$ be the point fixed by ${\mathcal K}$.    The Borel subgroup $B$ then determines an Iwahori subgroup ${\mathcal I} \subset {\mathcal K}$.     Let $C = \ScptB ({\mathscr G})^{\mathcal I}$ be the fixed points of the Iwahori subgroup ${\mathcal I}$.  It is an alcove in $\ScptB ({\mathscr G})$.

\smallskip

Take a maximally split torus $A \subset B$ defined over ${\mathcal R}_{v}$ so that $C$ is contained in the apartment ${\ScptA} (A_v)$ associated to $A_v$.  Let $\Phi= \Phi(G,A)$ and $\Phi^{+}= \Phi^{+}(B,A)$ be the root system of $A$ and positive root system with respect to $G$ and $B$.  

\smallskip 

For $\alpha \in \Phi$, let $U_{\alpha} \subset G$ denote the corresponding root group.  We have

\begin{equation}
U (k_{v}) \ = \ {\underset {\alpha \in \Phi^{+}}{\prod}} U_{\alpha} (k_{v}) \, . \end{equation}

Let $\Gamma = {\mathbb Z} \, \gamma_{0} \, \subset {\mathbb Q}$ be the additive subgroup so that the affine roots have the form $\alpha + \eta$ with $\alpha \in \Phi$ and $\eta \in \Gamma$.  Let $U_{\alpha+\eta}$ be the subgroup of $U_{\alpha}(k_{v})$ associated to the affine root $\alpha + \eta$.

\smallskip

Let $\Delta$ and $\Delta^{\text{\rm{aff}}}$ be the simple roots and simple affine roots of $\ScptA (A_v)$ with respect to the Borel and Iwahori subgroups $B$ and ${\mathcal I}$ respectively.  We recall that every $\alpha \in \Delta$ is the gradient part of a unique root  $\psi \in \Delta^{\text{\rm{aff}}}$.  In this way, we view $\Delta$ as a subset of $\Delta^{\text{\rm{aff}}}$.

\smallskip

\smallskip

Let $\beta \in \Phi^{+}$ be the highest root, and let $-\beta + \gamma_{0}$ ($\gamma_{0} > 0$) be the simple affine root.  Let $\ell$ be the height of $\beta$ and take $x_{0} \in C$ to be the point satisfying 
\begin{equation}
\forall \ \alpha \in \Delta \subset \Delta^{\text{\rm{aff}}} \ : \quad \alpha (x_{0}) \ = \ -\beta (x_{0}) + \gamma_{0} \ = \ {\frac{\gamma_{0}}{\ell + 1}}
\end{equation}

\smallskip

\noindent For $j \ge 0$ an integer, set: 

\begin{equation}\label{moy-prasad-0}
j' \ := \ j+ ({\frac{\gamma_{0}}{\ell+1}}) \ .
\end{equation}

\noindent So,
\begin{equation}
\aligned
\forall \ \alpha \in \Delta  \ : \qquad \qquad ( \, \alpha \, + \, j \, )\, (x_{0}) \ &= \ j \ + \ {\frac{\gamma_{0}}{\ell + 1}} \ , \\
\ &\ \\
{\text{\rm{and}}} \quad (\, -\beta \, + \, \gamma_{0} \, + \, j \, ) \, (x_{0}) \ &= \ j \ + \ {\frac{\gamma_{0}}{\ell + 1}} \ .
\endaligned
\end{equation}

\smallskip

Let $\Phi^{\text{\rm{aff}}}$ denote the affine roots.  We consider the Moy-Prasad groups
\begin{equation}
{\mathscr G}_{{x_{0}},j'} \ = \ (A_{v})_{j} \, {\underset 
{\begin{matrix} \psi \in \Phi^{\text{\rm{aff}}} \\ 
\psi (x_{0}) \ge j' 
\end{matrix}
}
{\prod}} U_{\psi} 
\ \ {\text{\rm{ and }}} \ \ 
{\mathscr G}_{{x_{0}},(j')^{+}} \ = \ (A_{v})_{j} \, {\underset 
{\begin{matrix} \psi \in \Phi^{\text{\rm{aff}}} \\ 
\psi (x_{0}) > j' 
\end{matrix}
}
{\prod}} U_{\psi} 
\end{equation}

\noindent defined in \cite{MoyPr1, MoyPr2}, and 

\begin{equation}\label{quotient-0}
{\text{\rm{ the quotient group }}} \ \ {\mathscr G}_{{x_{0}},j'} / {\mathscr G}_{{x_{0}},(j')^{+}} \ \ {\text{\rm{ is canonically }}} {\underset {\psi \in \Delta^{\text{\rm{aff}}}}{\prod}} U_{(\psi+j)} / U_{(\psi+j^{+} )} \ .
\end{equation}

As in ({\cite{MoyMuic}, \S 3.2}), let $\chi$ be a character of the quotient 
${\mathscr G}_{{x_{0}},j'} / {\mathscr G}_{{x_{0}},(j')^{+}}$ which is non-degenerate in the sense that under the canonical 
isomorphism of \eqref{quotient-0}, $\chi$ is non-trivial on each of the groups $U_{(\psi+j)} / U_{(\psi+j^{+} )}$.  Then, the proof of Lemma 3-19 in 
({\cite{MoyMuic}, \S 3.2}) generalizes to show the following Lemma:

\smallskip

\begin{Lem} \label{cusp-lemma} Let $\chi$ be a non-degenerate character of  ${\mathscr G}_{{x_{0}},j'} / {\mathscr G}_{{x_{0}},j^{+}}$.  Then, 
\smallskip
\begin{itemize}
\item[(i)] The inflation of $\chi$ to ${\mathscr G}_{{x_{0}},j'}$, when extended to ${\mathscr G}$ by zero outside ${\mathscr G}_{{x_{0}},j'}$, is a cusp form of ${\mathscr G}$.  
\smallskip
\item[(ii)] For each $j \ge 0$, there exists an irreducible supercuspidal representation $(\rho, W)$ which has a non-zero ${\mathscr G}_{{x_{0}},(j')^{+}}$--invariant vector but no non-zero ${\mathscr G}_{{x_{0}},j'}$--invariant vector.
\end{itemize}
\end{Lem}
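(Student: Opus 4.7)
The plan is to reduce part (i) to showing non-triviality of $\chi$ restricted to $N_0 \cap {\mathscr G}_{x_0,j'}$ for standard proper parabolic subgroups $P_0 = MN_0$, and then to deduce part (ii) from (i) via Bernstein's decomposition of cuspidal representations together with Moy--Prasad depth theory.

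For (i), since the cusp-form property is invariant under right translation by ${\mathscr G}$, it suffices to verify, for every $h \in {\mathscr G}$ and every proper standard $k$-parabolic $P_0 = MN_0$, that $\int_{N_0}f_\chi(n_0 x h)\,dn_0 = 0$ for all $x \in {\mathscr G}$. If no $n_0 \in N_0$ satisfies $n_0 x h \in {\mathscr G}_{x_0,j'}$ the integral is trivially zero; otherwise, fixing such an $m_0$ and substituting $n_0 = m m_0$, one uses that ${\mathscr G}_{x_0,j'}$ is a group and $\chi$ is a character to reduce the integral to
$$
\int_{N_0}f_\chi(n_0 xh)\,dn_0 \;=\; \chi(m_0 xh)\int_{N_0 \cap {\mathscr G}_{x_0,j'}} \chi(m)\,dm.
$$
The right-hand integral vanishes provided $\chi|_{N_0 \cap {\mathscr G}_{x_0,j'}}$ is non-trivial. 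But since $P_0$ is a proper standard parabolic, the root system of $N_0$ contains at least one simple root $\alpha \in \Delta$; hence $U_{\alpha+j} \subseteq N_0 \cap {\mathscr G}_{x_0,j'}$, and under the isomorphism of \eqref{quotient-0} its image is precisely the factor $U_{\psi+j}/U_{\psi+j^{+}}$ associated to the simple affine root $\psi$ with gradient $\alpha$. Non-degeneracy of $\chi$ then gives the required non-triviality.

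For (ii), consider the compactly induced representation $\pi = \cInd_{{\mathscr G}_{x_0,j'}}^{{\mathscr G}}(\chi)$, which is generated under right translation by $f_\chi$. By (i) combined with translation-invariance of the cusp-form property, every vector in $\pi$ is a cusp form, and Bernstein's theory then yields a decomposition of $\pi$ into irreducible supercuspidal summands. Choose a summand $\rho$ onto which $f_\chi$ projects non-trivially; this projection is ${\mathscr G}_{x_0,(j')^+}$-invariant and transforms by the non-trivial character $\chi$ under ${\mathscr G}_{x_0,j'}/{\mathscr G}_{x_0,(j')^+}$, so $\rho^{{\mathscr G}_{x_0,(j')^+}}\neq 0$. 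For the condition $\rho^{{\mathscr G}_{x_0,j'}}=0$, observe that, because $x_0$ is the barycenter of the alcove $C$, the affine-root values at $x_0$ form the set $\tfrac{\gamma_0}{\ell+1}\mathbb Z$, so the filtration $\{{\mathscr G}_{x_0,r}\}$ has no jump in the open interval $(j, j')$. Consequently ${\mathscr G}_{x_0,r^+}={\mathscr G}_{x_0,j'}$ for any such $r$, and by Moy--Prasad depth theory any $\rho$ of depth exactly $j'$ at $x_0$ satisfies $\rho^{{\mathscr G}_{x_0,j'}}=\rho^{{\mathscr G}_{x_0,r^+}}=0$.

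The main obstacle is this last assertion: isolating an irreducible summand of $\pi$ whose depth at $x_0$ is exactly $j'$, rather than strictly smaller. A summand of smaller depth could a priori absorb the $\chi$-eigenvector arising from $f_\chi$ while still possessing ${\mathscr G}_{x_0,j'}$-invariants coming from vectors of lower depth; ruling this out rigorously requires a careful invocation of Moy--Prasad depth theory and the classification of minimal $K$-types of \cite{MoyPr1, MoyPr2}, together with the observation that non-degeneracy of $\chi$ forces depth exactly $j'$ on at least one summand.
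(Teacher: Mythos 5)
The paper gives no proof here; it simply asserts that the proof of Lemma 3-19 in \cite{MoyMuic} ``generalizes.'' So the comparison is against the Harish-Chandra cusp-form criterion and Moy--Prasad theory rather than an explicit argument in the text. Your overall strategy (establish (i) directly, then deduce (ii) by compactly inducing $\chi$, invoking Bernstein and depth theory) is the natural one, and for (ii) the outline is sound once the depth-$j'$ claim is filled in from \cite{MoyPr1, MoyPr2}.

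There is, however, a genuine gap in your proof of (i). The Harish-Chandra cusp-form condition is that $\int_{N}f_{\chi}(xny)\,dn = 0$ for \emph{all} proper parabolics $P=MN$ and all $x,y\in{\mathscr G}$, or equivalently $\int_{N_{0}}f_{\chi}(x\,n_{0}\,y)\,dn_{0}=0$ for all \emph{standard} $N_{0}$ and \emph{both} variables $x,y$ free. You only treat $\int_{N_{0}}f_{\chi}(n_{0}\,y)\,dn_{0}$ for standard $N_{0}$ (your $xh$ collapses to a single right variable $y$); the left variable is absent. Redoing your substitution with the left variable present, one picks $n_{0}^{(0)}$ with $k_{0}:=x\,n_{0}^{(0)}\,y\in{\mathscr G}_{x_{0},j'}$ and finds
$$
\int_{N_{0}}f_{\chi}(x\,n_{0}\,y)\,dn_{0} \;=\; \chi(k_{0})\int_{y^{-1}N_{0}y}f_{\chi}(n')\,dn'
\;=\; \chi(k_{0})\int_{\left(y^{-1}N_{0}y\right)\cap\,{\mathscr G}_{x_{0},j'}}\chi(n')\,dn',
$$
so one must show $\chi$ is nontrivial on $\left(y^{-1}N_{0}y\right)\cap{\mathscr G}_{x_{0},j'}$ for an \emph{arbitrary conjugate} of $N_{0}$. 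Your argument via the simple root $\alpha\in\Delta$ and the factor $U_{\alpha+j}/U_{\alpha+j^{+}}$ under \eqref{quotient-0} only covers the semi-standard case $y=1$. Closing this requires an Iwahori/Bruhat decomposition of ${\mathscr G}$ and the Iwahori factorization of the Moy--Prasad group ${\mathscr G}_{x_{0},j'}$ with respect to semi-standard parabolics to reduce a general $y$ to a Weyl representative, which is precisely the non-trivial content of the \cite{MoyMuic}, Lemma 3-19 argument being invoked. As written, your reduction ``to standard parabolics with a right translate'' is not a valid reduction, and the proof of (i) is incomplete.
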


We show the irreducible supercuspidal representations arising from the cusp form $\chi$  are generic for a suitable (non-degenerate) character of the unipotent radical $U(k_{v})$ of $B(k_{v})$.

\smallskip

Recall the cusp form $\chi$ satisfies the following:  For $\alpha \in \Delta$ (positive simple roots), the restriction of the character $\chi$ to $U_{\alpha+j}$ factors to a non-trivial character of $U_{\alpha+j}/U_{\alpha+j^{+}}$.  Let $\xi$ be a character of $U(k_{v})$ so that:

\begin{equation}\label{character-0}
\xi_{| \, U_{\alpha+j}} \ {\text{\rm{ equals \ $\chi_{|\, U_{\alpha+j}}$ . }}}
\end{equation}

\noindent Clearly, $\xi$ is a non-degenerate character of the unipotent group $U(k_{v})$.

\smallskip

Recall for $f \in C^{\infty}_{c}(G(k_{v}))$, the Fourier coefficient of $f$ along $U(k_{v})$ with respect to $\xi$ is the function
$\mathcal F_{(\xi, U(k_{v}))}(f)$ on $G$ defined as:

\begin{equation}
\mathcal F_{(\xi, U(k_{v}))}(f) (g) \ := \ \int_{U(k_{v})} f(ug) \, \overline{\xi(u)} \, du \ .
\end{equation}

\noindent The coefficient $\mathcal F_{(\xi, U(k_{v}))}(f)$ lies the space:

\begin{equation}
{\text{\rm{c-Ind}}}^{G(k_{v})}_{U(k_{v})}(\xi) \ .
\end{equation}

\smallskip

\begin{Prop}\label{allen-1}
  Consider the cusp form $\chi$ defined in Lemma \eqref{cusp-lemma}, and $\xi$ a character of $U(k_{v})$ satisfying
  \eqref{character-0}.  Then, the Fourier coefficient $\mathcal F_{(\xi, U(k_{v}))}(\chi)$ satisfies,
  $\mathcal F_{(\xi, U(k_{v}))}(\chi)(1)$ is non-zero. 
\end{Prop}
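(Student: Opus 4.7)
The value at the identity is
\[
\mathcal F_{(\xi, U(k_v))}(\chi)(1) \;=\; \int_{U(k_v)} \chi(u)\,\overline{\xi(u)}\,du .
\]
Since $\chi$ is supported in ${\mathscr G}_{x_0,j'}$, the integration reduces to $U(k_v)\cap {\mathscr G}_{x_0,j'}$. First I would invoke the Moy--Prasad / Bruhat--Tits decomposition to factor
\[
U(k_v)\cap {\mathscr G}_{x_0,j'} \;=\; \prod_{\alpha\in\Phi^+} U_{\alpha+\eta_\alpha},
\]
where $\eta_\alpha\in\Gamma$ is minimal with $(\alpha+\eta_\alpha)(x_0)\ge j'$, and where the Haar measure factors as the product of Haar measures on the factors (in any fixed ordering). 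A direct height computation, using $\alpha(x_0)=h(\alpha)\cdot\tfrac{\gamma_0}{\ell+1}$ for a positive root of height $h(\alpha)$, shows that for $\alpha\in\Delta$ one has $(\alpha+\eta_\alpha)(x_0)=j'$ exactly (the ``boundary'' case), whereas for $\alpha\in\Phi^+\setminus\Delta$ one has $(\alpha+\eta_\alpha)(x_0)>j'$, i.e.\ $U_{\alpha+\eta_\alpha}\subset {\mathscr G}_{x_0,(j')^+}$.

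Next I would analyse how each of $\chi$ and $\xi$ acts on each factor. The character $\chi$, being the inflation of a character of the abelian quotient ${\mathscr G}_{x_0,j'}/{\mathscr G}_{x_0,(j')^+}$, is trivial on $U_{\alpha+\eta_\alpha}$ for every non-simple positive root $\alpha$. The character $\xi$ of the unipotent group $U(k_v)$ automatically factors through $U(k_v)/[U(k_v),U(k_v)]\cong \prod_{\alpha\in\Delta}U_\alpha(k_v)$, and hence is also trivial on $U_\gamma(k_v)$ for every $\gamma\in\Phi^+\setminus\Delta$. On each simple factor $U_{\alpha+j}$ (with $\alpha\in\Delta$), the defining relation \eqref{character-0} gives $\xi|_{U_{\alpha+j}}=\chi|_{U_{\alpha+j}}$, so pointwise $\chi(u_\alpha)\overline{\xi(u_\alpha)}=|\chi(u_\alpha)|^2=1$.

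Combining these observations, the integrand $\chi(u)\overline{\xi(u)}$ equals $1$ identically on $U(k_v)\cap {\mathscr G}_{x_0,j'}$, whence
\[
\mathcal F_{(\xi, U(k_v))}(\chi)(1) \;=\; \prod_{\alpha\in\Phi^+}\mathrm{vol}\bigl(U_{\alpha+\eta_\alpha}\bigr) \;>\; 0 .
\]
The main delicate point is the compatibility of the two abelianisations one is implicitly using: for $\chi$ this is the Moy--Prasad abelianness of ${\mathscr G}_{x_0,j'}/{\mathscr G}_{x_0,(j')^+}$ together with the fact that in its identification with $\prod_{\psi\in\Delta^{\text{\rm{aff}}}}U_{(\psi+j)}/U_{(\psi+j^{+})}$ only the $\psi=\alpha+j$ ($\alpha\in\Delta$) components can appear from elements of $U(k_v)$; for $\xi$ it is the elementary fact that non-simple positive root groups lie in the commutator subgroup $[U(k_v),U(k_v)]$. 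Once these two abelianisations are reconciled, the height computation identifying the ``boundary'' pieces and the standard factorisation of the Haar measure on an Iwahori-type product make the evaluation routine.
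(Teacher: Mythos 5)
Your proof is correct and is essentially the paper's argument with the details made explicit: the paper's proof is a one-line computation asserting that the integrand $\chi(u)\overline{\xi(u)}$ is identically $1$ on $U(k_v)\cap{\mathscr G}_{x_0,j'}$ and hence the value is $\mathrm{meas}(U(k_v)\cap{\mathscr G}_{x_0,j'})>0$. Your root-group decomposition, the height computation identifying the simple positive roots as the boundary affine roots, and the reconciliation of the two abelianizations ($\chi$ via the Moy--Prasad quotient, $\xi$ via $U/[U,U]$) are exactly the steps left implicit in the paper's short calculation.
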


\begin{proof}

\begin{equation}
\aligned
\mathcal F_{(\xi, U(k_{v}))}(\chi)(1) \ &= \ \int_{ U \, \cap \, {\mathscr G}_{{x_{0}},j'} } \chi(u) \ \overline{\xi (u)} \ du \
= \ \int_{ U \, \cap \, {\mathscr G}_{{x_{0}},j'}} 1 \ du \\
&= {\text{\rm{meas}}} (  U \, \cap \, {\mathscr G}_{{x_{0}},j'} )
\endaligned
\end{equation}

\noindent In particular, the Fourier coefficient function $\mathcal F_{(\xi, U(k_{v}))}(\chi)$ is a non-zero function.

\end{proof}

\medskip

Let $V_{\chi}$ be the $G(k_{v})$-subrepresentation of $C^{\infty}_{c}(G(k_{v}))$ consisting of the right translates of $\chi$.
It is a finite length supercuspidal representation of $G(k_{v})$, and

\begin{equation}
\mathcal F_{(\xi, U(k_{v}))} \ : \ V_{\chi} \ \longrightarrow \ {\text{\rm{c-Ind}}}^{G(k_{v})}_{U(k_{v})}(\xi)
\end{equation}

\noindent is a $G(k_{v})$-map.  Let $\mathfrak B$ be the finite number of Bernstein components which appear in $V_{\chi}$.
The Bernstein projection of $V_{\chi}$ to itself according to the components in $\mathfrak B$.  Similarly, let
${\text{\rm{c-Ind}}}^{G}_{U}(\xi) ({\mathfrak B})$ be the Bernstein projection of
${\text{\rm{c-Ind}}}^{G}_{U}(\xi) ({\mathfrak B})$ to the $\mathfrak B$ components.   Then

\begin{equation}
\mathcal F_{(\xi, U(k_{v}))} \ : \ V_{\chi} \ \longrightarrow \ {\text{\rm{c-Ind}}}^{G(k_{v})}_{U(k_{v})}(\xi) ({\mathfrak B}) \ ,
\end{equation}

\noindent and the non-zero Fourier coefficient function $\mathcal F_{(\xi, U(k_{v}))}(\chi )$ belongs to
${\text{\rm{c-Ind}}}^{G(k_{v})}_{U(k_{v})}(\xi) ({\mathfrak B})$.

\section{A Relation to \cite{MoyMuic}}\label{newforms}

In this section we combine the results of current paper with the results of our previous paper \cite{MoyMuic} in order
to prove the existence of generic  cuspidal forms on a simply connected absolutely almost simple algebraic group $G$ 
defined over $\bbQ$ such that $G_\infty = G(\bbR )$ is not compact. We remind the reader that these are the assumptions
of \cite{MoyMuic}.
Examples of such groups are split Chevalley groups such as $SL(n)$, $Sp(n)$, or split $G_2$. In this section we
let $k=\mathbb Q$.

For each prime $p$, let $\bbZ_p$ denote the p-adic integers inside $\bbQ_p$.  Recall that for almost all primes $p$,
the group $G$ is 
unramified over $\bbQ_p$.  Thus, $G$ is a group scheme over $\bbZ_p$, and   $G(\bbZ_p)$ is a hyperspecial maximal compact
subgroup of  
$G(\bbQ_p)$ (\cite{Tits}, 3.9.1).

As in Section \ref{fc}, we let $U$ be a unipotent $\mathbb Q$-subgroup of $G$. Let $\psi: U(\mathbb Q)\setminus U(\mathbb A)
\longrightarrow \mathbb C^\times$ be a (unitary) character.

As in (\cite{MoyMuic}, Assumptions 1-3) we consider a finite family of open compact subgroups but which satisfy more
restrictive properties.  We consider a finite family of open compact subgroups
\begin{equation}\label{finite-family}
{\mathcal F} = \{ L \}
\end{equation}
satisfying the following assumptions:

\medskip
\begin{Assumptions}\label{assumptions} \ 

\smallskip
\begin{itemize}
\item[(i)] Under the partial ordering of inclusion there exists a subgroup 
$L_{\text{\rm{min}}} \in {\mathcal F}$ that is a subgroup of all the others.  
\smallskip 
\item[(ii)] The groups $L\in {\mathcal F}$ are factorizable, i.e., $L={\underset p \prod} L_p$, and for all but finitely
  many $p$'s, the group $L_p$ is the maximal compact subgroup $G(\mathbb Z_p)$.  
\smallskip 
\item[(iii)]  There exists a non-empty finite set of primes $T$ such that for $p\in T$  the group $G(\mathbb Q_p)$ has a
  local cusp form $f_p\in C_c^\infty(G(\mathbb Q_p))$ which satisfies the following conditions:
\begin{itemize}
\item[(a)]  $f_p$ is $L_{\text{\rm{min}}, p}$--invariant on the right, and 
\item[(b)] $\int_{U(\mathbb Q_p)} f_p(u_p)\overline{\psi_p(u_p)} du_p\neq 0$.
\end{itemize}
Moreover, we assume that for $L\neq L_{\text{\rm{min}}}$ there exists $p\in T$ such that the integral $\int_{L_p} f_p(g_pl_p) dl_p=0$
for all $g_p\in G(\mathbb Q_p)$.
\item[(iv)] $\psi_p$ is trivial on $U(\mathbb Q_p)\cap L_{\text{\rm{min}}, p}$ for all $p\not\in T$.
\end{itemize}
\end{Assumptions}

The reader may want to compare these assumptions with  (\cite{MoyMuic}, Assumptions 1-3). We remark that using results of 
Section \ref{sec-6} we can  write down examples of families $\mathcal F$ satisfying 
Assumptions \ref{assumptions} in the ordinary generic case (see Introduction) by globalizing non--degenerate characters from that section. 
But this is very technical and we do not write down details here. 
Analogous result can be found in  \cite{MoyMuic}.

Let $L\subset  G(\mathbb A_f)$ be an open compact subgroup. We define a congruence subgroup $\Gamma_L$ of $G_\infty$
using (\ref{discreteS-01}).
We define $L^2_{cusp}(\Gamma_{L} \backslash G_\infty)$ to be the subset of $L^2(\Gamma_{L} \backslash G_\infty)$ consisting of all 
measurable functions $\varphi \in L^2(\Gamma_{L} \backslash G_\infty)$ such that $$
\int_{U_P(\mathbb R)\cap \Gamma_{L}\backslash U_P(\mathbb R)}\varphi (ug)=0, \ \ \text{(a.e.) for $g\in G_\infty$,}
$$
where $U_P$ is the unipotent radical of any proper $\mathbb Q$--parabolic subgroup $P$.

Further, assume that $L$ is factorizable $L=\prod_p L_p$ and that $\psi_p$ is trivial on $L_p\cap U(\mathbb Q_p)$ for all $p$.
Then, $\psi_\infty$ is trivial on $U_\infty \cap \Gamma_L$.
We remind the reader that in the proof of Lemma \ref{fc-1} we proved that $U_\infty \cap \Gamma_L\setminus U_\infty$ is
compact.   The basic considerations similar to those given at the beginning of Section \ref{fc} can be carried without
difficulties. So, as in Section \ref{fc}, for $\varphi \in L^2(\Gamma_{L} \backslash G_\infty)$, 
we define the $(\psi_\infty, U_\infty)$--Fourier coefficient
$$
\mathcal F_{(\psi_\infty, U_\infty)}(\varphi)(g_\infty)=
\int_{U_\infty \cap \Gamma_L\setminus U_\infty}\varphi(u_\infty g_\infty)\overline{\psi_\infty(u_\infty)} du_\infty, \ \ \text{a.e. for $g_\infty\in G_\infty$.}
$$
We say that $\varphi$ is
$(\psi_\infty, U_\infty)$--generic if $\mathcal F_{(\psi_\infty, U_\infty)}(\varphi)\neq 0$ (a.e.).  We define
the closed $G_\infty$--invariant subspace $L^2_{ \text{$(\psi_\infty, U_\infty)$--degenerate}}(\Gamma_L\setminus G_\infty)$ as in Section \ref{fc}.
As in Section \ref{gl}, we define 
$$
L^2_{cusp, \ \text{$(\psi_\infty, U_\infty)$--degenerate}}(\Gamma_L\setminus G_\infty)=
L^2_{ \text{$(\psi_\infty, U_\infty)$--degenerate}}(\Gamma_L\setminus G_\infty)\cap
L^2_{cusp}(\Gamma_{L} \backslash G_\infty).
$$
As before, we say that an irreducible closed subrepresentation in $L^2_{cusp}(\Gamma_{L} \backslash G_\infty)$ is
$(\psi_\infty, U_\infty)$--generic if
$$
\mathfrak U\not\subset L^2_{cusp, \ \text{$(\psi_\infty, U_\infty)$--degenerate}}(\Gamma_L\setminus G_\infty).
$$
As in the proof of Lemma \ref{gl-0}, we see that the functional from the space of cuspidal automorphic forms (i.e., the 
space of $K_\infty$--finite vectors) in $\mathfrak U$ given by
$$
\varphi\longmapsto \int_{U_\infty \cap \Gamma_L\setminus U_\infty}\varphi(u_\infty)\overline{\psi_\infty(u_\infty)} du_\infty
$$
is not zero.

After these preliminaries, we are ready to state and prove the main result of the present section. It is analogous to the
main result of \cite{MoyMuic}.

\begin{Thm}\label{intr-thm}  Suppose $G$ is a simply connected, absolutely almost simple algebraic group defined over
  ${\mathbb Q}$, such that $G_{\infty}$ is
  non-compact and ${\mathcal F} = \{ L \}$ is a finite set of open compact subgroups of $G({\bbA}_{f})$ satisfying assumptions
  \eqref{assumptions}. 
 Then, the orthogonal complement of 
$$
\sum_{\substack{ L \in {\mathcal F}  \\ L_{\text{\rm{min}}} \subsetneq L }} L^2_{cusp}(\Gamma_L \backslash G_\infty)
$$ 
in 
$L^2_{cusp}(\Gamma_{L_{\text{\rm{min}}}} \backslash G_\infty)$ contains an $(\psi_\infty, U_\infty)$--generic irreducible
(closed) subrepresentation.
\end{Thm}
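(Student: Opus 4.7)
The plan is to realize the required irreducible subrepresentation as an irreducible constituent of an appropriately chosen compactly supported Poincar\'e series $P(f)$ whose restriction to $G_\infty$ sits inside $L^2_{cusp}(\Gamma_{L_{\text{min}}}\setminus G_\infty)$, is orthogonal to the larger congruence spaces $L^2_{cusp}(\Gamma_L\setminus G_\infty)$ for $L\supsetneq L_{\text{min}}$, and has a non--vanishing $(\psi_\infty,U_\infty)$--Fourier coefficient. First I would assemble the global test function. For each $p\in T$ take $f_p$ as in Assumption~\ref{assumptions}(iii), which is a local cusp form, is $L_{\text{min},p}$--invariant on the right, and satisfies $\int_{U(\mathbb Q_p)}f_p(u_p)\overline{\psi_p(u_p)}\,du_p\neq 0$. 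For $p\notin T$ with $p$ ramified or non--hyperspecial take a suitable $L_{\text{min},p}$--right--invariant $f_p$ with a non--zero local Fourier integral (the standard averaging of $1_{L_{\text{min},p}}$ against $\overline{\psi}_p$ works in view of Assumption~\ref{assumptions}(iv)), and for the remaining $p$ put $f_p=1_{G(\mathbb Z_p)}$. With these finite--place choices, Lemma~\ref{fc-1} produces an $f_\infty\in C_c^\infty(G_\infty)$ so that, setting $f=f_\infty\otimes_p f_p$, the series $P(f)$ belongs to $L^2(G(\mathbb Q)\setminus G(\mathbb A))^{L_{\text{min}}}$, is non--zero, satisfies $\mathcal F_{(\psi,U)}(P(f))(1)\neq 0$, and, via the strong--approximation identification $L^2(G(\mathbb Q)\setminus G(\mathbb A))^{L_{\text{min}}}\cong L^2(\Gamma_{L_{\text{min}}}\setminus G_\infty)$, yields a non--zero element of $L^2(\Gamma_{L_{\text{min}}}\setminus G_\infty)$ with $\int_{\Gamma_{L_{\text{min}}}\cap U_\infty\setminus U_\infty}P(f)(u_\infty)\overline{\psi_\infty(u_\infty)}\,du_\infty\neq 0$.

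The next step is cuspidality. Since for each $p\in T$ the function $f_p$ is a local cusp form, one can invoke (\cite{Muic1}, Proposition~5.3), or equivalently an unwinding against the unipotent radical of any proper $\mathbb Q$--parabolic $P$, to conclude that all constant terms $P(f)_P$ vanish almost everywhere. Hence $P(f)\in L^2_{cusp}(G(\mathbb Q)\setminus G(\mathbb A))$, and $P(f)|_{G_\infty}\in L^2_{cusp}(\Gamma_{L_{\text{min}}}\setminus G_\infty)$.

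Now I would establish the orthogonality to the larger levels. For $L\in \mathcal F$ with $L_{\text{min}}\subsetneq L$, strong approximation (which applies because $G$ is simply connected, absolutely almost simple, with $G_\infty$ non--compact) identifies $L^2_{cusp}(\Gamma_L\setminus G_\infty)$ with the $L$--invariants in $L^2_{cusp}(G(\mathbb Q)\setminus G(\mathbb A))$. For any $\varphi$ in this space, the unwinding formula (\ref{invmeas-adel}) gives
\begin{equation*}
\langle P(f),\varphi\rangle \ = \ \int_{G(\mathbb A)} f(g)\,\overline{\varphi(g)}\,dg \ = \ \int_{G(\mathbb A)}\Bigl(\int_L f(gl)\,dl\Bigr)\overline{\varphi(g)}\,dg,
\end{equation*}
after averaging $\varphi(g)=\varphi(gl)$ on the right. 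Factorizing $\int_L f(gl)\,dl=f_\infty(g_\infty)\prod_p \int_{L_p}f_p(g_pl_p)\,dl_p$ and invoking the last clause of Assumption~\ref{assumptions}(iii), which furnishes some $p\in T$ with $\int_{L_p}f_p(g_pl_p)\,dl_p=0$ for all $g_p$, makes the product vanish identically, so $\langle P(f),\varphi\rangle=0$. Thus $P(f)|_{G_\infty}$ lies in the orthogonal complement of $\sum_{L_{\text{min}}\subsetneq L}L^2_{cusp}(\Gamma_L\setminus G_\infty)$ inside $L^2_{cusp}(\Gamma_{L_{\text{min}}}\setminus G_\infty)$.

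Finally, decompose this orthogonal complement into a Hilbert direct sum $\hat\oplus_j\mathfrak U_j$ of irreducible closed $G_\infty$--subrepresentations (which is possible, as it is a closed $G_\infty$--invariant subspace of the cuspidal spectrum, where multiplicities are finite). Writing $P(f)|_{G_\infty}=\sum_j\varphi_j$ with $\varphi_j\in\mathfrak U_j$, the evaluation--at--identity of $\mathcal F_{(\psi_\infty,U_\infty)}$ extends to a continuous functional on the algebraic smooth part (as in the proof of Lemma~\ref{gl-0}), and its non--vanishing on $P(f)|_{G_\infty}$ forces at least one $\varphi_j$ to have non--zero $(\psi_\infty,U_\infty)$--Fourier coefficient. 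Since $L^2_{cusp,\,(\psi_\infty,U_\infty)\text{--degenerate}}(\Gamma_{L_{\text{min}}}\setminus G_\infty)$ is closed and $G_\infty$--invariant, the corresponding $\mathfrak U_j$ is not contained in it, so $\mathfrak U_j$ is the desired $(\psi_\infty,U_\infty)$--generic irreducible subrepresentation. The main obstacle is guaranteeing step two, namely the cuspidality of $P(f)$; this is where the assumption that the $f_p$ ($p\in T$) are genuine local cusp forms, together with the technical input of (\cite{Muic1}, Proposition~5.3), does all the real work, while the remaining steps are formal consequences of Lemma~\ref{fc-1}, strong approximation, and the Hilbert--space decomposition of Theorem~\ref{thm-cusp-adel}.
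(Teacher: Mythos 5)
Your proposal is correct and follows essentially the same route as the paper: choose $f_p=1_{L_{\text{min},p}}$ for $p\notin T$ and the local cusp forms of Assumption~\ref{assumptions}(iii) for $p\in T$, invoke Lemma~\ref{fc-1} to obtain $f_\infty$ and a non--zero Fourier coefficient, deduce cuspidality of $P(f)$ via (\cite{Muic1}, Proposition~5.3) because the local cusp forms generate supercuspidal Bernstein classes, prove orthogonality to the larger levels by the averaging/factorization computation which is exactly (\cite{MoyMuic}, Lemmas~2-18, 2-19), and finish by decomposing into irreducibles and using that $P(f)|_{G_\infty}$ lies outside the closed degenerate subspace. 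The only cosmetic differences are that you decompose the full orthogonal complement rather than the closed subspace generated by $P(f)|_{G_\infty}$, and you insert a slightly unnecessary detour through a ``continuous evaluation functional'' where the cleaner argument (used implicitly via Lemma~\ref{gl-1}) is simply that a sum lying outside the closed $G_\infty$--invariant degenerate subspace must have at least one summand outside it.
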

\begin{proof} The proof of this theorem  is similar to the proof of (\cite{MoyMuic}, Theorem 1-4) but instead of
  (\cite{Muic1}, Theorem 4-2), we use Lemma  \ref{fc-1}. For $p\not\in T$, we let $f_p=1_{ L_{\text{\rm{min}}, p}}$. For $p\in T$, we use the cusp form $f_p$ given by  
Assumption \ref{assumptions} (iii).

Now, in view of our Assumptions \ref{assumptions}, we see that 
all assumptions (a)--(c)  of Lemma \ref{fc-1} hold. As a consequence, Lemma \ref{fc-1}
asserts that there exists   $f_\infty \in
C_c^\infty(G_\infty)$, $f_\infty \neq 0$, such that if we let  $f=f_\infty\otimes_{p} f_p$, then  the following holds:
\begin{equation}\label{aaa}
\int_{ U_\infty \cap \Gamma_{L_{\text{\rm{min}}}} \setminus U_\infty}P(f)(u_\infty)\overline{\psi_\infty(u_\infty)}du_\infty\neq 0. 
\end{equation}
Next, as in the proof of Lemma \ref{gl-1}, we see that $P(f)$ is cuspidal. Hence, (\cite{Muic1}, Proposition 3.2) 
implies that $P(f)|_{G_\infty}$ is $\Gamma_L$--cuspidal. Thus, (\ref{aaa}) implies that $P(f)|_{G_\infty}$ is a non--zero element of 
$L^2_{cusp}(\Gamma_{L_{\text{\rm{min}}}}\setminus G_\infty)$.

Next, as in (\cite{MoyMuic}, Lemmas 2-18, 2-19), we show that $P(f)|_{G_\infty}$ is orthogonal to 
$L^2_{cusp}(\Gamma_{L}\setminus G_\infty)$ in  $L^2_{cusp}(\Gamma_{L_{\text{\rm{min}}}}\setminus G_\infty)$
for all $L\in \mathcal F$, $L\neq L_{\text{\rm{min}}}$. Thus, the closed  $G_\infty$--invariant subspace $\mathcal U$ in 
$L^2_{cusp}(\Gamma_{L_{\text{\rm{min}}}}\setminus G_\infty)$ generated by $P(f)|_{G_\infty}$ is non--trivial by (\ref{aaa}), and
consequently direct sum of 
irreducible unitary representations each appearing with finite multiplicity \cite{godement}. Finally, using (\ref{aaa}) and arguing as 
in the proof of Lemma \ref{gl-1}, we see that  some of those representations must be 
$(\psi_\infty, U_\infty)$--generic.
\end{proof}

\end{document}